\newtheorem{thm}{Theorem}[section]
\newcommand{\bt}{\begin{thm}}
\newcommand{\et}{\end{thm}}
\newtheorem{conj}[thm]{Conjecture}
\newtheorem{ex}[thm]{Example}
\newtheorem{cor}[thm]{Corollary}   
\newcommand{\bc}{\begin{cor}}
\newcommand{\ec}{\end{cor}}
\newtheorem{lem}[thm]{Lemma}   
\newcommand{\bl}{\begin{lem}}
\newcommand{\el}{\end{lem}}
\newtheorem{prop}[thm]{Proposition}
\newcommand{\bp}{\begin{prop}}
\newcommand{\ep}{\end{prop}}
\newtheorem{defn}[thm]{Definition}
\newcommand{\bd}{\begin{defn}}    
\newcommand{\ed}{\end{defn}}
\newtheorem{rmrk}[thm]{Remark}   
\newcommand{\br}{\begin{rmrk}}
\newcommand{\er}{\end{rmrk}}
\newtheorem*{*thm}{Theorem}
\newcommand{\be}{\begin{equation}}
\newcommand{\ee}{\end{equation}}
\newcommand{\injrad}{\textrm{injrad}}
\newcommand{\N}{\mathbb{N}}
\newcommand{\R}{\mathbb{R}}
\newcommand{\vare}{\varepsilon}
\newcommand{\diam}{\operatorname{Diam}}
\newcommand{\vol}{\operatorname{Vol}}
\newcommand{\lp}{\left (}
\newcommand{\rp}{\right )}
\newcommand{\Sp}{\mathbb{S}}      
\newcommand{\Tor}{\mathbb{T}}     
\def\Xint#1{\mathchoice
{\XXint\displaystyle\textstyle{#1}}%
{\XXint\textstyle\scriptstyle{#1}}%
{\XXint\scriptstyle\scriptscriptstyle{#1}}%
{\XXint\scriptscriptstyle\scriptscriptstyle{#1}}%
\!\int}
\def\XXint#1#2#3{{\setbox0=\hbox{$#1{#2#3}{\int}$ }
\vcenter{\hbox{$#2#3$ }}\kern-.6\wd0}}
\def\dashint{\Xint-}
\title[Sobolev Inequalities for Riemannian Metrics and Distances]{Sobolev Inequalities and Convergence For Riemannian Metrics and Distance Functions}
\author{B. Allen}
\author{E. Bryden}
\begin{document}

\maketitle

\begin{abstract}
If one thinks of a Riemannian metric, $g_1$, analogously as the gradient of the corresponding distance function, $d_1$, with respect to a background Riemannian metric, $g_0$, then a natural question arises as to whether a corresponding theory of Sobolev inequalities exists between the Riemannian metric and its distance function. In this paper we study the sub-critical case $p < \frac{m}{2}$ where we show a Sobolev inequality exists between a Riemannian metric and its distance function. In particular, we show that an $L^{\frac{p}{2}}$ bound on a Riemannian metric implies an $L^q$ bound on its corresponding distance function. We then use this result to state a convergence theorem and show how this theorem can be useful to prove geometric stability results by proving a version of Gromov's conjecture for tori with almost non-negative scalar curvature in the conformal case. Examples are given to show that the hypotheses of the main theorems are necessary.
\end{abstract}

\section{Introduction}

If one thinks of a Riemannian metric, $g_1$, analogously as the gradient of the corresponding distance function, $d_1$, with respect to a background Riemannian metric, $g_0$, then a natural question arises as to whether a corresponding theory of Sobolev inequalities exists between the Riemannian metric and its distance function. In the previous work of the first named author \cite{Allen} it was observed that if one assumes an $L^{p}$, $p > \frac{m}{2}$ bound on an $m$-dimensional Riemannian metric then one can obtain H\"{o}lder control on  the corresponding distance function. This shows that there is an analogy for Morrey's inequality for Riemannian manifolds. The work of the first named author, R. Perales, and C. Sormani \cite{Allen-Perales-Sormani} shows that in the critical case where one assumes $L^{\frac{m}{2}}$ convergence of the Riemannian manifolds, and other geometric conditions, one can obtain Sormani-Wenger Intrinsic Flat convergence of the sequence. In this paper we are interested in studying the sub-critical case $p < \frac{m}{2}$ where we show a Sobolev theory inequality between a Riemannian metric and its distance function.

To this end, the first main theorem is a Sobolev inequality relating the $L^{\frac{p}{2}}$ norm of a Riemannian metric to the $L^q$ norm of its corresponding distance function.

\begin{thm}\label{Main Sobolev Ineq}
    Let $M^m$ be a connected, closed, and oriented manifold and $(M,g)$ be a fixed Riemannian manifold with metric $g$, and let $h$ be another Riemannian metric on $M$. Then, we have
    \begin{equation}
        \|d_{h}(x,y)\|_{L^{q}(M\times M, g\oplus g)}\le C(M,g,q)\|h\|^{\frac{1}{2}}_{L^{\frac{p}{2}}(M,g)},
    \end{equation}
    with $q<\frac{mp}{m-p}$.
\end{thm}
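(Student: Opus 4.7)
The plan is to reduce the problem fibrewise in $x$ to a classical Sobolev embedding combined with a Hardy--Littlewood--Sobolev fractional-integration bound. Set $F(y) := \sup_{|v|_g=1}\sqrt{h(v,v)}(y)$, so that $\|F\|_{L^p(M,g)} = \|h\|^{1/2}_{L^{p/2}(M,g)}$, and for each $x \in M$ consider the Lipschitz function $u_x(y) := d_h(x,y)$. The length formula for $d_h$ applied to $g$-unit-speed variations at $y$, together with Rademacher differentiability on the compact Riemannian manifold $(M,g)$, gives the a.e.\ pointwise bound $|\nabla_g u_x|(y) \le F(y)$, and in particular $\|\nabla u_x\|_{L^p(M,g)} \le \|F\|_{L^p(M,g)}$.

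Applying the Sobolev inequality on $(M,g)$ to $u_x$ yields, for $q \le \frac{mp}{m-p}$,
\[
\|u_x\|_{L^q(M,g)} \le C(M,g,q) \bigl(\|u_x\|_{L^p(M,g)} + \|\nabla u_x\|_{L^p(M,g)}\bigr).
\]
To control the lower-order term $\|u_x\|_{L^p}$, I would combine Poincar\'e's inequality, $\|u_x - \bar u_x\|_{L^p(M,g)} \le C\|\nabla u_x\|_{L^p(M,g)} \le C\|F\|_{L^p}$, with a pointwise estimate on the mean $\bar u_x := |M|^{-1}\int_M d_h(x,y)\,dV_g(y)$. Using $d_h(x,y) \le \int_0^{d_g(x,y)} F(\gamma_{x,y}(t))\,dt$ along a $g$-minimizing geodesic, passing to $g$-polar coordinates around $x$, exchanging the radial integrations by Fubini, and invoking the Jacobian comparison $J_g(s,\theta,x) \ge c(M,g)\,s^{m-1}$ for the exponential map on the compact manifold, one obtains
\[
\bar u_x \le C(M,g)\,I_1(F)(x), \qquad I_1(F)(x) := \int_M \frac{F(z)}{d_g(x,z)^{m-1}}\,dV_g(z),
\]
and hence the $x$-pointwise bound $\|d_h(x,\cdot)\|_{L^q(M)} \le C\|F\|_{L^p} + C\,I_1(F)(x)$.

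Raising to the $q$-th power and integrating in $x$ yields
\[
\|d_h\|^q_{L^q(M \times M,\,g\oplus g)} \le C\bigl(|M|\,\|F\|^q_{L^p(M,g)} + \|I_1 F\|^q_{L^q(M,g)}\bigr),
\]
and the Hardy--Littlewood--Sobolev inequality on the compact manifold $(M,g)$, giving $\|I_1 F\|_{L^q(M,g)} \le C(M,g,q)\|F\|_{L^p(M,g)}$ for every $q \le \frac{mp}{m-p}$, finishes the proof after recalling $\|F\|_{L^p} = \|h\|^{1/2}_{L^{p/2}}$. I expect the most delicate ingredient to be the global Jacobian lower bound $J_g(s,\theta,x) \gtrsim s^{m-1}$ uniformly in $(x,\theta)$: within the $g$-injectivity radius this follows from Jacobi-field comparison, but past the injectivity radius one must either chain through a finite cover by injectivity-sized $g$-balls or bypass the issue by estimating the Riesz kernel via the Green's function of the Laplacian on $(M,g)$, which satisfies the analogous decay and yields the same fractional-integration bound.
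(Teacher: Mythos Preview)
Your strategy differs from the paper's: you observe that $y\mapsto d_h(x,y)$ has $g$-gradient pointwise bounded by $F$, then invoke the Sobolev embedding in the $y$-variable, reducing everything to the estimate $\bar u_x \le C\,I_1 F(x)$ on the spatial mean. The paper instead proves the two-sided pointwise potential bound
\[
d_h(x,y)\le C\bigl(I_1F(x)+I_1F(y)\bigr)
\]
directly, by averaging over a family of curves that foliate a thin tube around the $g$-geodesic from $x$ to $y$ (its ``symmetric family of curves'' construction), and only then applies the $L^p\to L^q$ mapping property of the Riesz potential $I_1$.

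The gap is in your argument for $\bar u_x\le C\,I_1F(x)$. After Fubini in polar coordinates you need
\[
\int_t^{R(\theta)}J(r,\theta)\,dr \;\le\; C\,t^{1-m}J(t,\theta),
\]
and you justify this via $J(t,\theta)\ge c\,t^{m-1}$. That lower bound is simply false past the injectivity radius whenever the cut locus contains a conjugate point: on the round $S^m$ with $x$ the north pole one has $J(t)=\sin^{m-1}t\to 0$ as $t\to\pi$. Your ``chaining'' suggestion does not obviously help, since splitting the geodesic at intermediate points $z_i$ produces Riesz potentials centred at the $z_i$ rather than at $x$. The Green's-function route \emph{can} be made to work, but not as a mere kernel comparison: what one actually needs is the representation
\[
u(y)-\bar u=\int_M\nabla_zG(y,z)\cdot\nabla u(z)\,dV_g(z)
\]
together with the global gradient estimate $|\nabla_zG(y,z)|\le C\,d_g(y,z)^{1-m}$ on a closed manifold; applied to $u=u_x$ and evaluated at $y=x$ (where $u_x(x)=0$) this yields $\bar u_x\le C\,I_1F(x)$. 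Note, though, that the same representation already gives the full pointwise bound $d_h(x,y)\le C\bigl(I_1F(x)+I_1F(y)\bigr)$, so once it is in hand your Sobolev-embedding detour becomes redundant and you have essentially recovered the paper's key lemma by an alternative method. The paper's tubular-neighbourhood construction has the advantage of being self-contained and elementary: because the tube has width below the normal injectivity radius along a single geodesic, no conjugate-point degeneracy ever arises, and no appeal to global Green's-function estimates is needed.
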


Then we use an extension of Theorem \ref{Main Sobolev Ineq} to the difference between a sequence of Riemannian manifolds and their limit in order to give the following convergence theorem which guarantees pointwise convergence of distances to the distance function of a limiting Riemannian manifold. We then give an application where the concluded convergence in Theorem \ref{MainTheorem} is the strongest notion of convergence one should expect. We note that one way to think about this theorem is as a tool for bootstrapping up to stronger metric geometry notions of convergence such as Gromov-Hausdorff (GH) or Sormani-Wenger Intrinsic Flat (SWIF) convergence. Hence one can take a suboptimal $L^p$ convergence for a sequence of Riemannain metrics and conclude pointwise convergence of distances off of a singular set and then combine this pointwise convergence of distances with other geometric estimates in order to conclude GH and/or SWIF convergence. In fact, S. Lakzian and C. Sormani \cite{LaS} give conditions which imply GH and/or SWIF convergence when one assumes convergence of Riemannian metrics off of a singular set which one could possibly combine with Theorem \ref{MainTheorem}. We also have examples which show that the notion of convergence concluded in Theorem \ref{MainTheorem} is the best conclusion that one should expect given our hypotheses.

\begin{thm}\label{MainTheorem}
Let $M$ be a connected, closed, and oriented manifold, $g_0$ a smooth Riemannian manifold and $g_j$ a sequence of continuous Riemannian manifolds. If
\begin{align}
    g_j(v,v) \ge \left(1-\frac{1}{j} \right) g_0(v,v), \quad \forall p\in M, v \in T_pM,
\end{align}
and
\begin{align}
    \|g_j-g_0\|_{L^p_{g_0}(M)}\rightarrow 0,\quad \frac{1}{2} \le p\le \frac{m}{2},
\end{align}
then there exists a sub-sequence of the $g_{j}$ and there exists $U \subset M$ measurable so that $\vol_{g_0}(U)=\vol_{g_0}(M)$  and $\mathrm{dim}_{\mathcal{H}_{g_0}}(M \setminus U)\le m-2p$ and
\begin{align}
    d_k(p,q) \rightarrow d_0(p,q), \quad \forall p,q \in U.
\end{align}
Furthermore, for every $\varepsilon > 0$ we can find a measurable set $U_{\varepsilon}\subset M$ so that $\vol_{g_0}(M \setminus U_{\varepsilon})< \varepsilon$ and 
\begin{align}
    d_k(p,q) \rightarrow d_0(p,q),
\end{align}
uniformly $\forall p,q \in U_{\varepsilon}$.
\end{thm}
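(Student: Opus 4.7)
The plan is to exploit the pointwise lower bound hypothesis $g_j \ge (1-1/j) g_0$, which immediately forces $d_{g_j}(p,q) \ge (1-1/j)^{1/2} d_{g_0}(p,q)$ for all $p, q \in M$ and hence $\liminf_{j \to \infty} d_{g_j}(p,q) \ge d_{g_0}(p,q)$ pointwise. The content of the theorem is therefore the matching upper bound $\limsup d_{g_j}(p,q) \le d_{g_0}(p,q)$ for most pairs. I would decompose $g_j = (1-1/j)\,g_0 + h_j$ where $h_j := g_j - (1-1/j)\,g_0 \ge 0$, noting that $\|h_j\|_{L^p_{g_0}} \to 0$ by the triangle inequality. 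The elementary bound $\sqrt{a+b} \le \sqrt{a}+\sqrt{b}$, integrated along a $g_0$-minimizing geodesic $\gamma^0_{pq}$, yields
\[
 d_{g_j}(p,q) \le (1-1/j)^{1/2}\, d_{g_0}(p,q) + L_{h_j}(\gamma^0_{pq}),
\]
so the problem reduces to controlling the pseudo-length $L_{h_j}(\gamma^0_{pq})$.

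Next, I would invoke the extension of Theorem~\ref{Main Sobolev Ineq} alluded to before the theorem statement. Applied to $h_j$ and integrated along the family $\{\gamma^0_{pq}\}$, it yields $\|L_{h_j}(\gamma^0_{(\cdot,\cdot)})\|_{L^q(M\times M,\,g_0 \oplus g_0)} \le C\, \|h_j\|^{1/2}_{L^p_{g_0}} \to 0$ for an admissible exponent $q$. Combined with the lower bound this gives $\|d_{g_j} - d_{g_0}\|_{L^q(M\times M)} \to 0$, and extracting a subsequence $(g_k)$ furnishes pointwise a.e.\ convergence $d_{g_k}(p,q) \to d_{g_0}(p,q)$ on $M \times M$.

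The principal obstacle is promoting \emph{a.e.\ on $M \times M$} to \emph{everywhere on $U \times U$} for a single $U \subseteq M$ with $\dim_{\mathcal H_{g_0}}(M \setminus U) \le m - 2p$. My approach is to fix a basepoint $x_0 \in M$ and exploit the triangle inequality for pseudo-distances,
\[
 d_{h_k}(p,q) \le d_{h_k}(p, x_0) + d_{h_k}(x_0, q) =: F_k(p) + F_k(q),
\]
reducing the analysis to a single function $F_k$ on $M$. Passing to a further subsequence with $\|h_k\|^{1/2}_{L^p_{g_0}} \le 2^{-k}$ and forming $\Phi := \sum_k F_k$, the extension of Theorem~\ref{Main Sobolev Ineq} places $\Phi \in L^q(M)$; the classical correspondence between Sobolev capacity at the relevant index and Hausdorff measure then confines the singular set $S := \{\Phi = \infty\}$ to $\dim_{\mathcal H_{g_0}} S \le m - 2p$. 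On $U := M \setminus S$ one has $F_k \to 0$ pointwise, so $d_{h_k}(p,q) \to 0$ for every $p, q \in U$. Making this capacity--Hausdorff translation precise for distance functions of the degenerate pseudo-metrics $h_k$ is the technically heaviest step.

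Finally, for the uniform convergence on $U_\varepsilon$, Egorov's theorem applied to $F_k \to 0$ on $U$ produces, for each $\varepsilon > 0$, a set $U_\varepsilon \subseteq U$ with $\vol_{g_0}(M \setminus U_\varepsilon) < \varepsilon$ on which $F_k \to 0$ uniformly. The bound $d_{h_k}(p,q) \le F_k(p) + F_k(q)$ then forces uniform convergence of $d_{h_k}$, and hence of $d_{g_k}$, to the limit on $U_\varepsilon \times U_\varepsilon$.
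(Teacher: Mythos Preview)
Your lower-bound argument and the use of Egorov for the uniform statement are correct and match the paper. There are, however, two genuine gaps in the upper-bound and Hausdorff-dimension part.

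First, the single-geodesic estimate $d_{g_j}(p,q) \le (1-1/j)^{1/2} d_{g_0}(p,q) + L_{h_j}(\gamma^0_{pq})$ is valid, but what must then be controlled is $L_{h_j}(\gamma^0_{pq})$, the $h_j$-length of \emph{one specific curve}. Your reduction via the triangle inequality $d_{h_k}(p,q) \le F_k(p) + F_k(q)$ bounds $d_{h_k}(p,q)$, which is the wrong quantity: since $d_{h_k}(p,q) \le L_{h_k}(\gamma^0_{pq})$, an upper bound on $d_{h_k}$ says nothing about $L_{h_k}(\gamma^0_{pq})$. And $L_{h_j}(\gamma^0_{pq})$ genuinely cannot be bounded pointwise by $\|h_j\|_{L^p}$ --- concentrate $h_j$ in a thin tube around $\gamma^0_{pq}$ to see this. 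Second, even granting $\Phi = \sum_k F_k \in L^q(M)$, it does \emph{not} follow that $\dim_{\mathcal H}\{\Phi = \infty\} \le m - 2p$: an $L^q$ function can blow up on a set of any Hausdorff dimension below $m$. The capacity--Hausdorff correspondence you invoke applies to Riesz potentials (equivalently, Sobolev functions), and you have not established any such structure for $F_k(p) = d_{h_k}(p, x_0)$.

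The paper resolves both issues at once by averaging over a \emph{symmetric family of curves} of small width rather than using a single geodesic (Lemma~\ref{lem: distance potential estimate with error}), obtaining for every $\varepsilon>0$
\[
 d_{g_k}(x,y) - d_{g_0}(x,y) - \varepsilon \le C(\varepsilon)\bigl( (V_{g_0}f_k)(x) + (V_{g_0}f_k)(y) \bigr), \qquad f_k := |g_k - g_0|_{g_0}^{1/2},
\]
where $(V_{g_0}f)(x) = \int_M f(z)\, d_{g_0}(x,z)^{1-m}\, dV_{g_0}(z)$. This is already decoupled into a function of $x$ plus a function of $y$, and each summand is an honest Riesz potential of the $L^{2p}$ function $f_k$. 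The Hausdorff-dimension bound on the bad set $\{x:\limsup_k (V_{g_0}f_k)(x)>0\}$ then follows from a Frostman-lemma argument combined with a reverse H\"older inequality (Proposition~\ref{Lp Hausdorff dimension control}), after passing to a subsequence with $\sum_k \|f_k\|_{L^{2p}}^{2p} < \infty$.
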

\begin{rmrk}
We note that although $M \setminus U$ has small $g_0$ volume we are not claiming that it has small $g_j$ volume or even finite $g_j$ volume as $j \rightarrow \infty$. In fact, the geometry of $g_j$ on $M \setminus U$ can be quite exotic and not at all close to the geometry of $g_0$. For instance, splines, bubbles, and infinite cylinders can be forming along the sequence $g_j$ on the set $M \setminus U$. The strength of the theorem is that it can identify a full measure set with respect to $g_0$ where the metric spaces are converging in the pointwise sense. We will see that this is particularly useful for geometric stability results where bubbling can occur along a sequence.
\end{rmrk}

\begin{rmrk}
In the case that $p=\frac{m}{2}$ we see that the singular set must be Hausdorff dimension $0$ which can be seen as a refinement of the main theorem of the first named author, R. Perales, and C. Sormani where volume preserving intrinsic flat convergence is concluded under the same hypotheses \cite{Allen-Perales-Sormani}. In that paper pointwise a.e. convergence in the $p=\frac{m}{2}$ case (in fact the authors only need to assume volume convergence which is implied by $L^{\frac{m}{2}}$ convergence) was shown but there was no estimate given for the dimension of the singular set.
\end{rmrk}

Example \ref{Ex: Dim of Singular Set is Sharp } shows that the dimension of the singular set given in Theorem \ref{MainTheorem} is sharp and we also see that one should not expect to conclude any stronger notion of convergence under these hypotheses. In Example \ref{Ex: Subsequence is Needed} we show why a subsequence is necessary in Theorem \ref{MainTheorem} which is an analagous example to the classical analysis example which shows that if one assumes $L^p$ convergence of a sequence of functions then one can conclude pointwise convergence almost everywhere but necessarily on a subsequence. We also review Example \ref{Ex Cinched-Sphere} from the work of the first named author and C. Sormani \cite{Allen-Sormani-2} which shows that the assumed lower bound on distances is necessary to conclude convergence of distances to the distance function of a Riemannian manifold. Lastly we review Example \ref{Ex L^p Conv} from \cite{Allen-Sormani-2} which shows that if one assumes $L^p$ convergence for $p > \frac{m}{2}$ then one will be able to conclude uniform convergence of distances on the entire manifold. Hence $\frac{m}{2}$ is the critical power which separates when one should expect uniform convergence versus pointwise convergence of distances off of a singular set. The reader may also wish to study other examples comparing and contrasting notions of convergence for Riemannian manifolds given in \cite{Allen-Sormani, Allen-Bryden, Allen-Sormani-2, Allen-Perales-Sormani}.

There has been much research done on using integral bounds on Ricci curvature to obtain $C^{k,\alpha}$ and $W^{k,p}$ bounds on Riemannian manifolds such as Anderson \cite{Anderson-Ricci, Anderson-Orbifold}, Anderson and Cheeger \cite{Anderson-Cheeger}, Cheeger and Colding \cite{Cheeger-Colding-1}, Colding \cite{Colding-shape, Colding-volume}, Gao \cite{Gao-integral1}, Petersen and Wei \cite{Petersen-Wei-integral1, Petersen-Wei-integral2}, and Yang \cite{DYang-integral1, DYang-integral2, DYang-integral3}(See the survey by Petersen \cite{Petersen-Survey} for a broad overview). A result similar to Theorem \ref{MainTheorem}, due to J. Cheeger and T. Colding, can be found in \cite[Thm. 2.11]{Cheeger-Colding-2}. However, the result there has the same exponent on both sides of the inequality, and so is not in the spirit of a Sobolev inequality.  More recently C. Aldana, G. Carron, and S. Tapie \cite{Aldana-Carron-Tapie} used integral bounds on scalar curvature for a sequence of conformal Riemannian manifolds to show Gromov-Hausdorff convergence. In \cite{SH} S. Honda defines a notion of $L^p$ convergence for tensors under Gromov-Hausdorff convergence and shows that for sequences of Riemannian manifolds converging under Gromov-Hausdorff convergence one can always conclude $L^p$ weak convergence of the Remannian metrics. The authors \cite{Allen-Bryden} have also previously studied a similar problem where one assumes $W^{m-1,p}$ control on a Riemannian metric and concludes H\"{o}lder control on the corresponding distance function which is particularly interesting in the case where $1 \le p \le \frac{m}{m-1}$.

As an application of Theorem \ref{MainTheorem} we would like to study the following conjecture by Gromov \cite{GroD}.

\begin{conj}\label{MainConjecture}
Let $M_j = (\mathbb{T}^n, g_j)$, $n \ge 3$ be a sequence of Riemannian manifolds such that 
\begin{align} \label{HypothesisConjecture}
R_{g_j} \ge -\frac{1}{j}, \,\,\, V_0 \le \vol(M_j) \le V^0 \,\,\, \text{and} \,\,\, \diam(M_j) \le D_0  ,
\end{align}
where $R_{g_j}$ is the scalar curvature. 
If no bubbling occurs along the sequence or one can cut out the bubbles forming along the sequence then there is a subsequence of $M_j$ converging in some sense to the flat torus.
\end{conj}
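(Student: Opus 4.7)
Since this is a well-known open conjecture in general, I will propose an approach to the conformal case that matches the application advertised in the abstract. Write $g_j = u_j^{4/(n-2)} g_0$ where $g_0$ is a fixed flat metric on $\mathbb{T}^n$ and $u_j > 0$ are smooth conformal factors. Using the conformal change formula for scalar curvature and $R_{g_0} = 0$, the hypothesis $R_{g_j} \ge -1/j$ becomes the almost-subharmonic inequality
\[
\Delta_{g_0} u_j \;\le\; \frac{n-2}{4(n-1)\,j}\, u_j^{(n+2)/(n-2)}
\]
on the flat torus. The volume bound becomes $V_0 \le \|u_j\|_{L^{2n/(n-2)}(g_0)}^{2n/(n-2)} \le V^0$, and the ``no bubbling''/``bubble removal'' hypothesis is what will furnish a uniform lower bound $u_j \ge c > 0$, so that, after a normalization absorbing the diameter bound, one expects $u_j \to 1$.

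My plan from this point is to feed the inequality into elliptic compactness machinery. The two-sided $L^{2n/(n-2)}$ bound together with the uniform positive lower bound should yield uniform $L^p$ control on $u_j$ via an iteration argument, and extracting a weak subsequential limit $u_j \rightharpoonup u_\infty$ produces a weak solution of $\Delta_{g_0} u_\infty \le 0$ on the compact torus. The maximum principle then forces $u_\infty$ to be constant, and the volume normalization fixes this constant to $1$. Rellich--Kondrachov compactness promotes this to strong $L^p$ convergence $u_j \to 1$, and hence $g_j \to g_0$ in $L^{n/2}(g_0)$ via the conformal factor formula.

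This sets up exactly the hypotheses of Theorem \ref{MainTheorem}: the lower bound on $u_j$ gives $g_j \ge (1 - \epsilon_j) g_0$ with $\epsilon_j \to 0$, and we have the requisite $L^{n/2}$ convergence of $g_j$ to $g_0$. Theorem \ref{MainTheorem} then produces, on a further subsequence, a full $g_0$-measure set $U \subset \mathbb{T}^n$ on which $d_{g_j}(p,q) \to d_{g_0}(p,q)$ pointwise, with uniform convergence off sets of arbitrarily small $g_0$-measure. To upgrade this to the Gromov--Hausdorff or Sormani--Wenger intrinsic flat convergence asked for in the conjecture, I would combine the pointwise-off-a-singular-set convergence with the framework of Lakzian and Sormani \cite{LaS}, whose hypotheses are tailored to exactly this transition.

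The central obstacle is justifying the uniform lower bound $u_j \ge c > 0$. Without the ``no bubbling'' hypothesis, $u_j$ can collapse on $g_0$-large regions while concentrating on $g_0$-small ones, simultaneously violating the comparison assumption of Theorem \ref{MainTheorem} and creating non-trivial metric pieces in the pointed limit. Making the ``cutting out bubbles'' clause quantitative---specifying what is removed, showing that the excised manifolds still satisfy a suitable scalar curvature inequality and lower metric comparison, and controlling the distance function across the singular excised region---is the delicate heart of the argument, and is precisely the setting in which the Sobolev-type control of Theorem \ref{Main Sobolev Ineq} appears to be needed in order to bound distances from a merely $L^{p/2}$ bound on the conformal factor near the bubbles.
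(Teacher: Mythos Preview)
This statement is Conjecture~\ref{MainConjecture} in the paper---Gromov's conjecture---and the paper does \emph{not} prove it; it is explicitly described as open and ``intentionally vague.'' The paper's actual contribution in this direction is Theorem~\ref{TorusThm}, a conformal special case under extra hypotheses ($\tilde g_{0,j}\to g_0$ in $C^1$ and an integral bound on $e^{-2f_j}$), whose conclusion is only the pointwise/almost-uniform convergence of distances furnished by Theorem~\ref{MainTheorem}; no upgrade to GH or SWIF is attempted. Your sketch is therefore best compared against the proof of Theorem~\ref{TorusThm}.

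The paper's route differs from yours in one decisive technical move: it writes $g_j=e^{2f_j}\tilde g_{0,j}$ and works with the \emph{inverse} quantity $e^{-f_j}$. The scalar-curvature lower bound then rewrites as
\[
-(n-1)\,\Delta e^{-2f_j}+(n-1)(n+2)\,|\nabla e^{-f_j}|^{2}\le \tfrac{1}{j},
\]
and integrating over the torus kills the Laplacian term, giving $\int|\nabla e^{-f_j}|^{2}\le C/j$ outright. Poincar\'e then forces $e^{-f_j}\to\overline{e^{-f_j}}$ in $L^{2}$, and the (almost) sub-harmonicity of $e^{-2f_j}$ together with the mean-value inequality converts the assumed integral bound $\int e^{-2f_j}\le C$ into a \emph{pointwise} lower bound on $e^{2f_j}$. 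The limit is $c_\infty^{2}g_0$, a rescaled flat torus rather than $g_0$ itself, and the argument stops at the conclusion of Theorem~\ref{MainTheorem}.

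Your route through $\Delta u_j\le\tfrac{n-2}{4(n-1)j}\,u_j^{(n+2)/(n-2)}$ and weak compactness has a genuine gap at the step where you invoke Rellich--Kondrachov to promote weak to strong $L^{p}$ convergence: you have not produced any $W^{1,p}$ bound on $u_j$. Testing your inequality against $u_j$ gives $-\int|\nabla u_j|^2\le C/j$, which is the wrong sign; it is precisely the passage to the inverse factor that makes a vanishing gradient bound fall out for free in the paper. Separately, identifying ``no bubbling'' with a pointwise lower bound $u_j\ge c>0$ conflates bubbling with collapsing: the paper explicitly \emph{allows} $e^{2f_j}\to\infty$ on small sets (those are the bubbles, absorbed into the singular set of Theorem~\ref{MainTheorem}) and instead derives the lower bound from the integral hypothesis on $e^{-2f_j}$ via the mean-value inequality.
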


This conjecture by Gromov is intentionally vague and one needs to be more precise about how they will handle bubbling and in what sense the subsequence will converge. In previous special cases studied for this conjecture by the first named author, L. Hernandez-Vazquez, D. Parise, A. Payne, and S. Wang \cite{AHMPPW}, the first named author \cite{Allen, Allen-Tori}, A. J. Cabrera Pacheco, C. Ketterer, and R. Perales \cite{PKP19}, and J. Chu and M.-C. Lee \cite{JCMCL} conditions were imposed, such as the MinA condition, a uniform integrability condition or an $L^p$ bound for large $p$, to rule out the possibility of bubbling and Sormani-Wenger intrinsic flat convergence was concluded. In our case we do not rule out the possibility of bubbling but rather cut out the bubbles by applying Theorem \ref{MainTheorem}. Hence the bubbles will occur on the exceptional set identified in Theorem \ref{MainTheorem} and the geometric stability says that as long as we stay away from the singular set we will find uniform convergence of the metric spaces.

\begin{thm}\label{TorusThm}
Consider a sequence of Riemannian $n-$manifolds $M_j=(\mathbb{T}^n,g_j)$, $n \ge 3$ satisfying 
 \begin{align} \label{HypothesisMainThm}
R_{g_j} \ge -\frac{1}{j}, \,\,\,  \,\,\, \diam(M_j) \le D_0, \,\,\,  \,\,\, \vol(M_j) \le V_0.
\end{align}
Let $g_0$ be a flat torus where $\mathbb{T}^m_0=(\mathbb{T}^m,g_0)$ and assume that $M_j$ is conformal to $\tilde{M}_{0,j}=(\mathbb{T}^m,\tilde{g}_{0,j})$, a metric with constant zero or negative scalar curvature and unit volume, i.e.  $g_j = e^{2f_j} \tilde{g}_{0,j}$. Furthermore, assume that
\begin{align}
\tilde{g}_{0,j} \rightarrow g_0 \text{ in } C^1,
\end{align}
and
\begin{align}
\int_{\mathbb{T}^m} e^{-2f_j} d V_{\tilde{g}_{0,j}} \le C,\label{LowerVolume}
\end{align} 
then for any $\varepsilon >0$ there exists a subsequence and a set $U_{\varepsilon} \subset \Tor^m$ so that  $\vol(U_{\varepsilon})=\vol(\Tor^m)-\varepsilon$ and
\begin{align}
  d_k(q_1,q_2) \rightarrow d_{\infty}(q_1,q_2),
\end{align}
uniformly on $U_{\varepsilon}$ where $d_{\infty}$ is the distance function for $\bar{\mathbb{T}}_0^m = (\Tor^m,\bar{g}_0=c_{\infty}^2g_0)$, $c_{\infty}^2=\displaystyle\lim_{k \rightarrow \infty}(\overline{e^{-f_k}})^{-2}= \lim_{k \rightarrow \infty} \left( \dashint_{\mathbb{T}^m} e^{-f_k}dV_{\tilde{g}_{0,j}} \right)^{-2}$.
\end{thm}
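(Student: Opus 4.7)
The plan is to reduce the theorem to Theorem \ref{MainTheorem} applied to $g_j$ against the limiting metric $\bar g_0 = c_\infty^2 g_0$, thereby obtaining subsequential uniform convergence of distances $d_k \to d_\infty$ on sets of nearly full volume. To do so one must verify the two hypotheses of that theorem: a pointwise near-lower bound $g_j(v,v) \ge (1-\varepsilon_j)\bar g_0(v,v)$ and $L^p_{\bar g_0}$-convergence $\|g_j - \bar g_0\|_{L^p_{\bar g_0}}\to 0$ for some $p \in [1/2, n/2]$.

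First, I would recast the data via the Yamabe substitution $u_j = e^{(n-2)f_j/2}$, so that $g_j = u_j^{4/(n-2)}\tilde g_{0,j}$ and $R_{g_j}\ge -1/j$ becomes the Yamabe inequality
\begin{equation*}
-\tfrac{4(n-1)}{n-2}\Delta_{\tilde g_{0,j}} u_j + R_{\tilde g_{0,j}} u_j \ge -\tfrac{1}{j} u_j^{(n+2)/(n-2)}.
\end{equation*}
The upper volume bound gives $\|u_j\|_{L^{2n/(n-2)}(\tilde g_{0,j})} \le V_0^{(n-2)/(2n)}$, while \eqref{LowerVolume} yields $\|u_j^{-1}\|_{L^{4/(n-2)}(\tilde g_{0,j})}\le C^{(n-2)/4}$. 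Since the Yamabe invariant of $\mathbb{T}^n$ is $0$ and is realized by flat metrics, the $C^1$-convergence $\tilde g_{0,j}\to g_0$ forces the constant scalar curvatures $R_{\tilde g_{0,j}}\le 0$ to tend to $0$.

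Next, I would test the Yamabe equation against $u_j - \bar u_j$, where $\bar u_j = \dashint u_j\,dV_{\tilde g_{0,j}}$, to obtain a Poincar\'e-type inequality showing $\|u_j-\bar u_j\|_{H^1(\tilde g_{0,j})}\to 0$. A Moser iteration or De Giorgi oscillation argument applied to $v_j = u_j - \bar u_j$, which satisfies a linear elliptic inequality with right-hand side $R_{g_j} u_j^{(n+2)/(n-2)}$ controlled in a suitable $L^p$, would then upgrade this to $\|u_j - \bar u_j\|_{L^\infty}\to 0$, yielding uniform control $u_j = \bar u_j(1+ o(1))$. Extracting a convergent subsequence $\bar u_j\to u_\infty$ and matching normalizations through \eqref{LowerVolume} (which controls $\overline{e^{-f_j}} = \overline{u_j^{-2/(n-2)}}$), one identifies $u_\infty = c_\infty^{(n-2)/2}$, so that $\bar g_0 = u_\infty^{4/(n-2)} g_0 = c_\infty^2 g_0$ as in the statement.

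With this uniform closeness of $u_j$ to the constant $u_\infty$ and the $C^1$-convergence $\tilde g_{0,j}\to g_0$, both the pointwise lower bound $g_j\ge (1-\varepsilon_j)\bar g_0$ and the $L^p$-convergence $\|g_j - \bar g_0\|_{L^p_{g_0}}\to 0$ follow. Theorem \ref{MainTheorem}, applied with $\bar g_0$ in place of $g_0$, then produces the subsequence and set $U_\varepsilon$ with $\vol_{\bar g_0}(\mathbb{T}^n\setminus U_\varepsilon) < \varepsilon$ on which $d_k\to d_\infty$ uniformly; adjusting $\varepsilon$ by the constant scaling factor $c_\infty^n$ matches the conclusion. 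The main obstacle is the $L^\infty$-closeness of $u_j$ to a constant: the scalar curvature hypothesis provides only a one-sided differential inequality, and the lower volume assumption \eqref{LowerVolume} is precisely what is needed to preclude deep wells in $u_j$ that would defeat the Moser iteration and the resulting pointwise lower bound. This step is analogous to arguments appearing in earlier special cases of Gromov's torus conjecture such as \cite{AHMPPW, PKP19, JCMCL}.
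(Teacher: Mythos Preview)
Your overall plan---verify the two hypotheses of Theorem \ref{MainTheorem} against $\bar g_0 = c_\infty^2 g_0$ and then invoke it---is exactly what the paper does. But the heart of your argument, upgrading $H^1$-closeness of $u_j$ to its average to $\|u_j - \bar u_j\|_{L^\infty}\to 0$ by Moser or De Giorgi iteration, cannot succeed, and this is a structural obstruction rather than a technical one. The hypothesis $R_{g_j}\ge -1/j$ is one-sided, so your Yamabe inequality yields only $\Delta_{\tilde g_{0,j}} u_j \le \tfrac{n-2}{4(n-1)j}\,u_j^{(n+2)/(n-2)}$; that is, $u_j$ is nearly \emph{super}harmonic. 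A one-sided elliptic inequality of this type produces, through any Moser/De Giorgi scheme, only a one-sided pointwise bound---here a \emph{lower} bound on $u_j$. Nothing prevents spikes: the volume bound controls $\|u_j\|_{L^{2n/(n-2)}}$ but not $\|u_j\|_{L^\infty}$, and the right-hand side $R_{g_j}u_j^{(n+2)/(n-2)}$ is neither two-sided nor in $L^p$ for any $p>n/2$. Such spikes are precisely the bubbles the theorem is designed to allow on $\mathbb{T}^n\setminus U_\varepsilon$; were $u_j$ uniformly close to a constant one would get uniform convergence of $d_j$ on all of $\mathbb{T}^n$, contradicting the remarks following the theorem and Example \ref{Ex: Dim of Singular Set is Sharp }.

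The paper sidesteps this by working with the inverse variable $e^{-f_j}$ (equivalently $u_j^{-2/(n-2)}$). In that variable the scalar curvature formula reads
\[
-(n-1)\Delta e^{-2f_j} + (n-1)(n+2)\,|\nabla e^{-f_j}|^2 \le \tfrac{1}{j},
\]
and simply integrating over the closed torus (no testing against $u_j-\bar u_j$ is needed) gives $\int|\nabla e^{-f_j}|^2 \le C/j$, hence by Poincar\'e $e^{-f_j}\to \overline{e^{-f_j}}$ in $L^2$. Dropping the gradient term leaves $-\Delta e^{-2f_j}\le C/j$, so $e^{-2f_j}$ is nearly \emph{sub}harmonic, and the mean value inequality then yields a pointwise \emph{upper} bound on $e^{-2f_j}$---equivalently the required pointwise \emph{lower} bound $g_k \ge (1-C/k)\,c_\infty^2 g_0$. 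The $L^p$-convergence of $g_k$ to $\bar g_0$ is obtained not from two-sided $L^\infty$ control but from the a.e.\ convergence of $e^{-f_k}$ along a subsequence combined with the $L^n$ bound on $e^{f_k}$ furnished by the volume hypothesis. The asymmetry---pointwise control only from below, integral control from above---is essential to the conclusion, and your $L^\infty$ step erases it.
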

\begin{rmrk}
We note that although $\vol(\Tor^m \setminus U_{\varepsilon})$ has small volume with respect to the flat torus one should not expect it to have small or even finite volume with respect to the sequence $M_j$. This is because geometrically bubbles are forming along the sequence on the singular set which will add more volume to $M_j$ and possibly infinitely more volume as $j \rightarrow \infty$. Theorem \ref{TorusThm} motivates an alternative approach to Conjecture \ref{MainConjecture} where one tries to cut out the bubbles occurring along some exceptional set and show uniform convergence or Gromov-Hausdorff convergence on the complement of a neighborhood of the exceptional set. 
\end{rmrk}

\begin{rmrk}
Notice that \eqref{LowerVolume} is equivalent to $\|g_j^{-1}\|_{L^1(M,\tilde{g}_{0,j})}\le C$ which is a non-collapsing assumption for the sequence. 
\end{rmrk}

\section{Background}

In this section we review the tools from the first named author's paper \cite{Allen} which we will use throughout the rest of the paper. We start by stating a simple lemma which allows us to estimate the norm of vectors with respect to one Riemannian metric to the norm of vectors with respect to another Riemannian metric.

\begin{lem}\label{NormComparisonLemma}
 For $v \in T_pM$ and $g_0,g_1$ Riemannian metrics on $M$ we have the inequality
 \begin{align}
 |v|_{g_1}\le |g_1|_{g_0}^{\frac{1}{2}}|v|_{g_0}.
 \end{align}
 \end{lem}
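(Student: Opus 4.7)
The strategy is to diagonalize $g_1$ with respect to $g_0$ at the point $p$ and reduce the estimate to an eigenvalue comparison. Since $g_0$ and $g_1$ are both symmetric positive-definite bilinear forms on $T_pM$, there exists a $g_0$-orthonormal basis $\{e_1,\ldots,e_m\}$ of $T_pM$ in which $g_1$ is diagonal, with eigenvalues $\lambda_1,\ldots,\lambda_m \ge 0$ satisfying $g_1(e_i,e_j)=\lambda_i \delta_{ij}$. This is the simultaneous diagonalization of two quadratic forms.

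Next, I would write $v = \sum_i v^i e_i$ and compute both norms directly in this basis:
\begin{equation}
|v|_{g_0}^2 = \sum_i (v^i)^2, \qquad |v|_{g_1}^2 = \sum_i \lambda_i (v^i)^2.
\end{equation}
The obvious bound $|v|_{g_1}^2 \le \lambda_{\max}\,|v|_{g_0}^2$ then reduces the problem to showing $\lambda_{\max} \le |g_1|_{g_0}$.

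For the final step I would use the standard definition of the tensor norm induced by $g_0$ on a $(0,2)$-tensor, namely $|g_1|_{g_0}^2 = g_0^{ik} g_0^{jl} (g_1)_{ij}(g_1)_{kl}$. In the diagonalizing basis $\{e_i\}$ this expression collapses to $|g_1|_{g_0}^2 = \sum_i \lambda_i^2$, so in particular $|g_1|_{g_0}^2 \ge \lambda_{\max}^2$. Combining with the previous step gives $|v|_{g_1}^2 \le \lambda_{\max}\,|v|_{g_0}^2 \le |g_1|_{g_0}\,|v|_{g_0}^2$, and taking square roots yields the claimed inequality.

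There is no real obstacle here; the only minor subtlety is confirming the convention for $|g_1|_{g_0}$ (it is the Frobenius/Hilbert--Schmidt type norm rather than the operator norm, and the former dominates the latter, which is exactly what is needed). The argument is purely pointwise linear algebra, so no analytic or geometric input is required beyond the existence of simultaneous diagonalization of positive-definite symmetric forms.
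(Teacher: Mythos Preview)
Your argument is correct. The paper itself does not supply a proof of this lemma; it is stated in the Background section as a result imported from \cite{Allen}, so there is nothing to compare against beyond confirming that your pointwise linear-algebra argument (simultaneous diagonalization, then $\lambda_{\max}\le |g_1|_{g_0}$ for the Frobenius-type tensor norm) establishes exactly the stated inequality.
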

 
 This can be useful to estimate lengths of curves with respect to $g_1$ by the lengths of curves with respect to $g_0$ as long as one adds the norm of $g_1$ with respect to $g_0$. We now also review the definition of the $L^p$ norm of $g_1$ with respect to $g_0$:
 
 \begin{align}
     \|g_1\|_{L^p_{g_0}(M)}=\left(\int_M |g_1|_{g_0}^pdV_{g_0}\right)^{1/p},
 \end{align}
 where $dV_{g_0}$ is the volume form with respect to $g_0$.

 If $f_1:M\rightarrow [0,\infty)$ and $g_1=f_1^2g_0$ then factoring $f_1^2-1$ and applying H{\"o}lder's inequality, we estimate $\|g_1-g_0\|_{L_{g_0}^{\frac{p}{2}}(M)}$ in terms of $\|f_1-1\|_{L_{g_0}^{p}(M)}$
  \begin{align}
     \|g_1-g_0\|_{L^{\frac{p}{2}}_{g_0}(M)}&=\left(\int_M |g_1-g_0|_{g_0}^{\frac{p}{2}}dV_{g_0}\right)^{{\frac{2}{p}}}
     \\&=\left(\int_M |f_1^2-1|_{g_0}^{\frac{p}{2}}|g_0|_{g_0}^{\frac{p}{2}}dV_{g_0}\right)^{{\frac{2}{p}}}
     \\&=n\left(\int_M |f_1-1|_{g_0}^{\frac{p}{2}}|f_1+1|_{g_0}^{\frac{p}{2}}dV_{g_0}\right)^{{\frac{2}{p}}}\label{eq:conf_metr_factor}
     \\&\le n\left(\int_M |f_1-1|_{g_0}^{p}dV_{g_0}\right)^{{\frac{1}{p}}}\left( \int_M|f_1+1|_{g_0}^{p}dV_{g_0}\right)^{{\frac{1}{p}}},\label{LastEqL^pExplanation}
 \end{align}
 where the last term in \eqref{LastEqL^pExplanation} in bounded by the Minkowski inequality and an $L^p$ bound on $f_1$. On the otherhand, the reverse triangle inequality gives us that
 \begin{equation}
     \|g_1-g_0\|_{L^{\frac{p}{2}}_{g_0}}\geq\|g_1\|_{L^{\frac{p}{2}}_{g_0}}-n|M|^{\frac{2}{p}}=n\|f_1\|_{L^{p}_{g_0}}-n|M|^{\frac{2}{p}}.
 \end{equation}

 We now give the definition of a symmetric family of curves, depicted in Figure \ref{fig:SymmetricFamilyCurves}, which foliates a region of full volume by curves connecting points $q_1,q_2 \in M$.

\begin{defn}\label{SymFamilyofCurvesDef}
 Let $q_1,q_2 \in M_0$ and $\alpha(t)$ be a length minimizing geodesic joining $q_1$ to $q_2$, with respect to $M_0$, of length $L$. By extending a distance $\tau \in (0,\varepsilon)$ in radial directions orthogonal to $\alpha'(t)$, which can be parameterized over $\Sp^{m-2}$, one obtains a tubular neighborhood $\alpha \subset \bar{U}_{\varepsilon} \subset M$ with coordinates $(\tau,t,\vec{s}) \in [0,\varepsilon]\times [0,L]\times\Sp^{m-2}$. For fixed $(\tau,\vec{s}) \in  [0,\varepsilon] \times \Sp^{m-2}$ we define a curve connecting $q_1$ and $q_2$ in coordinates by
 \begin{align}
 \gamma(\tau,t,\vec{s}) = (\tau L\sin\left(\frac{\pi}{L}t\right),t,\vec{s}).
 \end{align} 
 We define the \textbf{symmetric family of curves joining $q_1$ to $q_2$ of width $\varepsilon$} to be the set $U_{\varepsilon}$ foliated by the curves $\gamma(\tau,t,\vec{s})$ for $(\tau,t,\vec{s}) \in (0,\varepsilon)\times [0,L]\times\Sp^{m-2}$.

 Let $d\gamma^{-1}$ be the differential of the map $\gamma^{-1}:U_{\vare}\setminus\{q_1,q_2\}\rightarrow[0,\vare]\times(0,L)\times\Sp^{m-2}$ and  $[\gamma'(p)]^{\perp}\subset T_pM$ be the orthogonal subspace to $\gamma'(p)$. The \textbf{normal Jacobian} of $\gamma^{-1}$ at $p$ is the determinant of the restriction of $d\gamma^{-1}\big\vert_{p}$ to $[\gamma'(p)]^{\perp}$ denoted
 \begin{align}
    NJ\gamma^{-1}(p):= \det_{[\gamma'(p)]_{g_0}^{\perp}}\left(d\gamma^{-1}\big\vert_{p}\right).
 \end{align}
 \end{defn}
 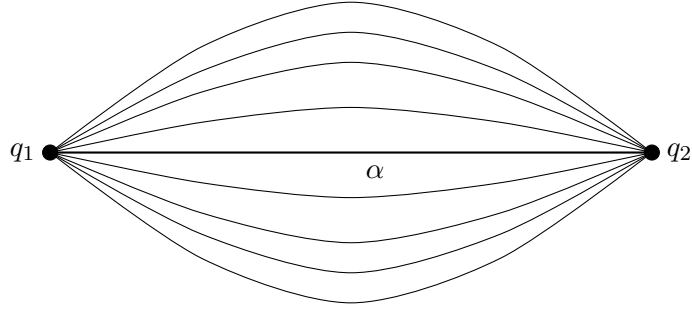
\begin{figure}
 \label{fig:SymmetricFamilyCurves}
 \begin{tikzpicture}[scale=4]
  \draw[thick] (-1,1) -- (1,1);
  \draw[fill] (-1,1) circle [radius=0.025];
  \node[left, outer sep=2pt] at (-1,1) {$q_1$};
  \draw[fill] (1,1) circle [radius=0.025];
  \node[right, outer sep=2pt] at (1,1) {$q_2$};
  \node[right, outer sep=2pt] at (0,1-1/16) {$\alpha$};
   \draw plot [smooth] coordinates {(-1,1)(-.5,1.1)(0,1.15)(.5,1.1)(1,1)};
  \draw plot [smooth] coordinates {(-1,1)(-.5,1-.1)(0,1-.15)(.5,1-.1)(1,1)};
  \draw plot [smooth] coordinates {(-1,1)(-.5,1.2)(0,1.3)(.5,1.2)(1,1)};
  \draw plot [smooth] coordinates {(-1,1)(-.5,1-.2)(0,1-.3)(.5,1-.2)(1,1)};
  \draw plot [smooth] coordinates {(-1,1)(-.5,1.27)(0,1.4)(.5,1.27)(1,1)};
  \draw plot [smooth] coordinates {(-1,1)(-.5,1-.27)(0,1-.4)(.5,1-.27)(1,1)};
  \draw plot [smooth] coordinates {(-1,1)(-.5,1.35)(0,1.5)(.5,1.35)(1,1)};
  \draw plot [smooth] coordinates {(-1,1)(-.5,1-.35)(0,1-.5)(.5,1-.35)(1,1)};
 \end{tikzpicture} 
  \caption{Symmetric family of curves joining $q_1$ to $q_2$ which foliates a region of full volume around $\alpha$. }
 \end{figure}
 
 We then review a Lemma which guarantees the existence of a normal cylindrical neighborhood around any geodesic with a uniform size which does not depend on the points $q_1,q_2 \in M$ which the geodesic is connecting. 
 
 \begin{lem}\label{NorNeighRadiusEst}
 Let $M_0=(M,g_0)$ be a smooth Riemannian manifold defined on the closed manifold $M$. Let $K\in [0,\infty)$ be a bound on the absolute value of the sectional curvature of $M_0$ and $\injrad(M_0)$ be the injectivity radius. If  $q_1,q_2\in M_0$ and $\alpha(t)$ a length minimizing geodesic joining $q_1$ to $q_2$, parameterized by arc length, then there exists a $\bar{\varepsilon}(K,\injrad(M_0)) > 0$ so that for any $0 < \varepsilon < \bar{\varepsilon}$  a tubular neighborhood of width $\varepsilon$ exists, $\bar{U}_{\varepsilon}$.
 \end{lem}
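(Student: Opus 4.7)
The plan is to construct the normal cylindrical neighborhood $\bar U_\varepsilon$ as the image of the $\varepsilon$-disk bundle of the normal bundle $N\alpha$ under the normal exponential map $\exp^\perp \colon N\alpha \to M$, and to verify that this map is a diffeomorphism onto its image for all $\varepsilon < \bar\varepsilon(K, \inj(M_0))$. Two ingredients are required: local diffeomorphism, coming from a uniform focal-radius lower bound, and global injectivity, coming from the injectivity radius.

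For the first ingredient, I would invoke Rauch's comparison theorem for Jacobi fields along $\alpha$ perpendicular to $\alpha'$. Because the sectional curvatures satisfy $|K_{\mathrm{sec}}| \le K$, such a Jacobi field with $J(0)=0$ and $J'(0)$ perpendicular to $\alpha'(0)$ cannot vanish at any time less than some explicit $\bar\varepsilon_1(K) > 0$ (one may take $\pi/(2\sqrt{K})$ if $K > 0$ and any finite constant if $K=0$). Consequently the differential of $\exp^\perp$ is an isomorphism everywhere on the $\bar\varepsilon_1$-disk bundle, so $\exp^\perp$ is a local diffeomorphism there.

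For the second ingredient, set $\bar\varepsilon = \tfrac{1}{2}\min(\bar\varepsilon_1, \inj(M_0))$ and suppose $\exp^\perp(t_1, v_1) = \exp^\perp(t_2, v_2) = p$ with $|v_i| < \bar\varepsilon$ and $v_i$ perpendicular to $\alpha'(t_i)$. Then $d_{g_0}(\alpha(t_1), p), d_{g_0}(\alpha(t_2), p) < \bar\varepsilon$, hence $d_{g_0}(\alpha(t_1), \alpha(t_2)) < 2\bar\varepsilon \le \inj(M_0)$, so we may work inside the normal chart $\exp_{\alpha(t_1)}^{-1}$ on the ball $B_{g_0}(\alpha(t_1), \inj(M_0))$. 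There, by the focal-radius estimate from Step 1, the nearest-point projection onto $\alpha$ is single-valued, so $p$ has a unique closest point on $\alpha$; this forces $\alpha(t_1) = \alpha(t_2)$, after which $v_1 = v_2$ follows from the local injectivity in Step 1.

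The main obstacle is ensuring that the uniqueness of the nearest-point projection in Step 2 is quantitative, with constants depending only on $K$ and $\inj(M_0)$. I would handle this by pulling the map $(t, v) \mapsto \exp_{\alpha(t)}^\perp(v)$ back to the fixed tangent space $T_{\alpha(t_1)}M$ via $\exp_{\alpha(t_1)}^{-1}$ and computing its differential at $(t_1, 0)$, which is the identity under the orthogonal splitting $T_{\alpha(t_1)}M = \R \alpha'(t_1) \oplus \alpha'(t_1)^\perp$; a quantitative inverse function theorem, with second-order error bounds controlled by $K$, then gives a uniform domain of injectivity and yields $\bar\varepsilon$ as a function of $K$ and $\inj(M_0)$ alone, as claimed.
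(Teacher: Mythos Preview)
The paper does not actually prove this lemma; it is stated in Section~2 as background reviewed from the earlier paper \cite{Allen}, so there is no in-paper argument to compare against. Your outline is the standard one and is essentially correct, but two points deserve tightening.

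First, in Step~1 you speak of ``Jacobi fields along $\alpha$ perpendicular to $\alpha'$ with $J(0)=0$''. Those control conjugate points \emph{along} $\alpha$, whereas the local diffeomorphism of $\exp^\perp$ is governed by \emph{focal points} of $\alpha$, i.e.\ by $\alpha$-Jacobi fields along the normal geodesics $s\mapsto \exp_{\alpha(t)}(sv)$ with $J(0)\in\R\alpha'(t)$ and $J'(0)$ determined by the (vanishing) second fundamental form of the geodesic. Rauch still gives the bound you want, but the fields live along the normal rays, not along $\alpha$.

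Second, the sentence ``the nearest-point projection onto $\alpha$ is single-valued, so $p$ has a unique closest point on $\alpha$; this forces $\alpha(t_1)=\alpha(t_2)$'' does not quite close the argument: $\alpha(t_1)$ and $\alpha(t_2)$ are foot points of perpendicular geodesics to $p$, hence critical points of $t\mapsto d_{g_0}(p,\alpha(t))^2$, but neither need be the global minimizer a priori. What you actually need is that this function has a \emph{unique critical point} on the relevant parameter interval; this follows from strict convexity via the second variation formula once you are inside the focal radius, using also that $\alpha$ is minimizing so $d_{g_0}(\alpha(t_1),\alpha(t_2))=|t_1-t_2|<2\bar\varepsilon$. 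You correctly flag this as the main obstacle and your quantitative inverse function theorem patch in the final paragraph is a valid alternative route to the same conclusion, with constants depending only on $K$ and $\inj(M_0)$.
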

 
 Lastly, we state a lemma which gives estimates on the geometry of the symmetric family of curves in terms of geometric bounds on the background Riemannian metric $g_0$.
 
 \begin{lem}\label{MainToolsForProof}
 Let $M_0=(M,g_0)$ be a smooth Riemannian manifold defined on the smooth, connected, closed manifold $M$. Assume  $q_1,q_2\in M_0$ and $\alpha(t)$ a length minimizing geodesic joining $q_1$ to $q_2$, parameterized by arc length. then there exists an $\varepsilon >0$ so that a symmetric family of curves joining $q_1$ to $q_2$ of width $\varepsilon$ exists, $U_{\varepsilon}$. If $K \in [0,\infty)$, is a bound on the absolute value of the sectional curvature, $I_0=\injrad(M_0)$ the injectivity radius, and $D_0=\diam(M_0)$ the diameter of $M_0$ then for $\bar{\varepsilon}=\bar{\varepsilon}(K, \injrad(M_0))$ from Lemma \ref{NorNeighRadiusEst} we have that for $0<\varepsilon \le \frac{\bar{\varepsilon}}{\diam(M_0)} $ a symmetric family of curves joining $q_1$ to $q_2$ of width $\varepsilon$ exists, $U_{\varepsilon}$,  and with $\gamma'=\frac{d\gamma}{dt}$ we find
 \begin{align}
 |\gamma'(\tau,t,\vec{s})|_{g_0} \le C(K,I_0,D_0), \quad (\tau,t,\vec{s}) \in [0,\varepsilon]\times [0,L]\times \Sp^{m-2}.
\end{align}  
We find the normal jacobian of $\gamma^{-1}$ to be estimated by
\begin{align}\label{Curve_Family_Coarea_Formula}
NJ\gamma^{-1}&\le C(M,g_0) \left(L\sin\left(\frac{\pi}{L}t\right) \right)^{1-m}.
\end{align}

 \end{lem}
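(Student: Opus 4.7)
The plan is to work throughout in Fermi coordinates $(r,t,\vec{s})$ around the unit-speed minimizing geodesic $\alpha$ of length $L\le D_0$, where the symmetric curve has the explicit form $\gamma(\tau,t,\vec{s}) = (\tau L\sin(\pi t/L),\,t,\,\vec{s})$ from \defref{SymFamilyofCurvesDef}. In these coordinates all three conclusions reduce to direct computation.

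For existence of $U_\varepsilon$, the radial coordinate $\tau L\sin(\pi t/L)$ attains its maximum $\tau L$ at $t=L/2$, so the choice $\varepsilon \le \bar\varepsilon/D_0$ guarantees $\tau L \le \bar\varepsilon$ for every $\tau\in[0,\varepsilon]$, placing the entire family inside the normal cylindrical neighborhood $\bar U_{\bar\varepsilon}$ provided by \lemref{NorNeighRadiusEst}, on which Fermi coordinates are a valid chart.

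For the velocity bound, I would differentiate in coordinates to obtain $\gamma'(\tau,t,\vec{s}) = (\tau\pi\cos(\pi t/L),\,1,\,0)$. Since $\partial_r$ is a unit vector orthogonal to $\partial_t$ and $\partial_{\vec{s}}$ in Fermi coordinates, expanding gives $|\gamma'|_{g_0}^2 = (\tau\pi\cos(\pi t/L))^2 + g_{tt}(r,t,\vec{s}) + O(r^2)$. Standard Jacobi field (Rauch) comparison on the disk of radius $\bar\varepsilon$ bounds $g_{tt}$ by a constant depending only on $K$ and $\bar\varepsilon=\bar\varepsilon(K,I_0)$, and $\tau\le \bar\varepsilon/D_0$ is bounded, yielding $|\gamma'|_{g_0}\le C(K,I_0,D_0)$.

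For the normal Jacobian, the coordinate Jacobian of the map $(\tau,t,\vec{s})\mapsto (\tau L\sin(\pi t/L), t, \vec{s})$ is exactly $L\sin(\pi t/L)$ (the $\tau$-column is $(L\sin(\pi t/L),0,0)$, the other columns are standard basis vectors up to an entry which drops out of the determinant), while Fermi coordinates themselves carry a polar-type volume density $r^{m-2} = (\tau L\sin(\pi t/L))^{m-2}$ from parameterizing the unit $(m-2)$-sphere at radius $r$. Combining these with the coarea identity $NJ\bar\gamma^{-1}\cdot |J\gamma| = |\gamma'|_{g_0}$ and the velocity estimate from the previous step, the stated bound follows, with the singularity $(L\sin(\pi t/L))^{1-m}$ encoding the degeneracy of the symmetric family onto $\alpha$ at $t=0,L$. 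The main obstacle is keeping every constant uniform in $q_1,q_2$; this is exactly what \lemref{NorNeighRadiusEst} buys us, since Jacobi comparison on the fixed cylinder of radius $\bar\varepsilon(K,I_0)$ produces constants depending only on the background geometry through $K$, $I_0$, and $D_0$.
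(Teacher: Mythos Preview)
The paper does not actually prove this lemma; it is stated in the Background section as a result ``reviewed'' from the first author's earlier paper \cite{Allen}, so there is no argument here to compare against. Your proposal supplies the natural proof and is correct in outline: work in Fermi coordinates around $\alpha$, use \lemref{NorNeighRadiusEst} for existence, Rauch/Jacobi comparison for the metric coefficients, and a direct Jacobian computation for the coarea estimate.

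Two points where you should tighten the argument. First, in the velocity step the ``$+O(r^2)$'' is superfluous: in genuine Fermi coordinates one has $g_{rr}=1$ and $g_{r\,\cdot}=0$ exactly, so $|\gamma'|_{g_0}^2 = (\tau\pi\cos(\pi t/L))^2 + g_{tt}$ on the nose, and only $g_{tt}$ needs Jacobi comparison. Second, and more importantly, in the normal-Jacobian step you should make explicit that $\Gamma_\varepsilon = [0,\varepsilon]\times\Sp^{m-2}$ is being equipped with the Euclidean \emph{ball} metric (volume element $\tau^{m-2}\,d\tau\,d\sigma$), not the product metric. This is stated later in the paper (\lemref{Coarea Formula to Potential Integral}) and is essential: the Fermi volume density contributes a factor $r^{m-2}=(\tau L\sin(\pi t/L))^{m-2}$, and without the matching $\tau^{m-2}$ on the parameter side your identity $NJ\bar\gamma^{-1}\cdot|J\gamma|=|\gamma'|_{g_0}$ would produce a spurious $\tau^{2-m}$ blow-up in $NJ\bar\gamma^{-1}$. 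Once the $\tau^{m-2}$ factors cancel, you are left with $|J\gamma|$ bounded below by $C^{-1}(L\sin(\pi t/L))^{m-1}$, and the claimed inequality follows.
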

 
 Finally, we state the main theorem of the first named author in \cite{Allen} which is analogous to Theorem \ref{MainTheorem}. These two theorems together show the analogy of Sobolev theory applied to Riemannian metrics and their distance functions does hold.
 
 \begin{thm}\label{ConfDistBoundAbove}
Let $g_0$ be a smooth Riemannian metric and $g_1$ a continuous Riemannian metric defined on the smooth, connected, closed  manifold $M$. If
\begin{align}
\|g_1\|_{L_{g_0}^{\frac{p}{2}}(M)} \le C,\quad p > m,
\end{align}
then we can bound the distance between $q_1,q_2$, with respect to $g_1$, by
\begin{align}
d_{g_1}(q_1,q_2) \le C'(M,g_0,C) d_{g_0}(q_1,q_2)^{\frac{p-m}{p}}.
\end{align}
\end{thm}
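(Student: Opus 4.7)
The plan is to prove this as a Sobolev-type estimate for the distance function, using the averaging-over-a-family-of-curves technique on the symmetric family of curves produced by Lemma \ref{MainToolsForProof}. Given $q_1,q_2\in M$, I would first fix a minimizing $g_0$-geodesic $\alpha$ joining them of length $L=d_{g_0}(q_1,q_2)$ and apply Lemma \ref{MainToolsForProof} with $\varepsilon$ chosen as large as the lemma permits, namely $\varepsilon=\bar\varepsilon/\diam(M,g_0)$, so that $\varepsilon$ is a positive constant depending only on $(M,g_0)$.

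Since for each $(\tau,\vec s)\in[0,\varepsilon]\times\Sp^{m-2}$ the curve $t\mapsto\gamma(\tau,t,\vec s)$ joins $q_1$ to $q_2$, Lemma \ref{NormComparisonLemma} gives
\[
d_{g_1}(q_1,q_2)\le\int_0^L|\gamma'|_{g_1}\,dt\le\int_0^L|g_1(\gamma)|_{g_0}^{1/2}|\gamma'|_{g_0}\,dt.
\]
Averaging this inequality over $(\tau,\vec s)\in[0,\varepsilon]\times\Sp^{m-2}$ and applying the coarea formula associated to $\bar\gamma^{-1}$ (noting that $|\gamma'|_{g_0}\,dt$ is the $\mathcal H^1$-element on each fiber) converts the resulting triple integral into an integral over $U_\varepsilon$:
\[
d_{g_1}(q_1,q_2)\le\frac{1}{\varepsilon|\Sp^{m-2}|}\int_{U_\varepsilon}|g_1|_{g_0}^{1/2}\cdot NJ\bar\gamma^{-1}\,dV_{g_0}.
\]

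Next I would apply Hölder's inequality with conjugate exponents $p$ and $p/(p-1)$ to split off the assumed $L^{p/2}$ norm:
\[
\int_{U_\varepsilon}|g_1|_{g_0}^{1/2}\cdot NJ\bar\gamma^{-1}\,dV_{g_0}\le\|g_1\|_{L^{p/2}_{g_0}}^{1/2}\left(\int_{U_\varepsilon}(NJ\bar\gamma^{-1})^{p/(p-1)}\,dV_{g_0}\right)^{(p-1)/p}.
\]
For the remaining factor I would push back to $[0,\varepsilon]\times[0,L]\times\Sp^{m-2}$ via coarea a second time and insert the pointwise bound $NJ\bar\gamma^{-1}\le C(M,g_0)(L\sin(\pi t/L))^{1-m}$ from \eqref{Curve_Family_Coarea_Formula}. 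After the substitution $u=\pi t/L$ the $t$-integral evaluates to a constant multiple of $L^{(p-m)/(p-1)}\int_0^\pi\sin^{(1-m)/(p-1)}(u)\,du$. Collecting the pieces, the formal $\varepsilon^{-1/p}$ that emerges is absorbed into a constant depending only on $(M,g_0)$, yielding $d_{g_1}(q_1,q_2)\le C(M,g_0,C)\,L^{(p-m)/p}$, as required.

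The main obstacle, and the reason the hypothesis $p>m$ is used, is the integrability of $\int_0^\pi\sin^{(1-m)/(p-1)}(u)\,du$: the exponent $(1-m)/(p-1)$ exceeds $-1$ precisely when $p>m$, so if $p$ were allowed to reach the threshold $m$, the Jacobian concentration near the endpoints $q_1$ and $q_2$ would overwhelm the $L^{p/2}$ control on $g_1$ and this method would fail. A secondary subtlety is that the formal $\varepsilon^{-1/p}$ loss from averaging is harmless only because Lemma \ref{MainToolsForProof} permits $\varepsilon$ to be chosen as a positive constant depending just on $(M,g_0)$; if instead $\varepsilon$ had to shrink with $L$, the resulting exponent on $L$ would degrade below $(p-m)/p$.
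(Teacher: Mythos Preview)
The paper does not actually prove Theorem \ref{ConfDistBoundAbove}; it is stated in the Background section as ``the main theorem of the first named author in \cite{Allen}'' and is only quoted here. So there is no in-paper proof to compare against. That said, Lemmas \ref{NormComparisonLemma}, \ref{NorNeighRadiusEst}, and \ref{MainToolsForProof} are imported precisely from \cite{Allen}, and your argument is exactly the averaging-over-a-symmetric-family + H\"older argument those lemmas are built for; it is correct and is the intended method.

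Two minor bookkeeping points. First, in the paper's convention (see the proof of Lemma \ref{lem: distance potential estimate}) the parameter space $\Gamma_\varepsilon=[0,\varepsilon]\times\Sp^{m-2}$ is given the Euclidean $(m-1)$-ball measure, so $\mu_\varepsilon(\Gamma_\varepsilon)=\omega_{m-1}\varepsilon^{m-1}$ rather than $\varepsilon|\Sp^{m-2}|$; consequently the loss from averaging is $\varepsilon^{-(m-1)/p}$, not $\varepsilon^{-1/p}$. This is irrelevant to the conclusion since $\varepsilon$ is fixed in terms of $(M,g_0)$, as you correctly emphasize. Second, when you ``push back via coarea a second time'' it is cleanest to write
\[
\int_{U_\varepsilon}(NJ\bar\gamma^{-1})^{\frac{p}{p-1}}\,dV_{g_0}
=\int_{\Gamma_\varepsilon}\int_{\text{fiber}}(NJ\bar\gamma^{-1})^{\frac{1}{p-1}}\,d\mathcal H^1\,d\mu_\varepsilon,
\]
so that the exponent on $NJ\bar\gamma^{-1}$ appearing under the $t$-integral is $1/(p-1)$; combined with the bound \eqref{Curve_Family_Coarea_Formula} this gives exactly the $\int_0^\pi\sin^{(1-m)/(p-1)}(u)\,du$ you claim, finite precisely when $p>m$. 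With these cosmetic fixes your argument goes through verbatim.
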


\section{Examples}

In this section we give several examples which show that the hypotheses of our main theorems are necessary. These examples also further serve to explain the geometric behavior expected under such hypotheses. Our first example shows that the dimension of the singular set in Theorem \ref{MainTheorem} is sharp for $p> 1$.

\begin{ex}\label{Ex: Dim of Singular Set is Sharp } 
Let $(\theta_1,...,\theta_m)$ be coordinates on $\Tor^m$, $n \in \N$, and define
\begin{align}
    \Sigma^n=\{(\theta_1,...,\theta_m) \in \Tor^m: \theta_{n+1}=\theta_{n+2}=...=\theta_m=0\}.
\end{align}
Let $r$ denote the distance to $\Sigma^{m-p}$ with respect to the metric $d\theta_1^2+\dots+ d\theta_m^2$ and consider the sequence of functions on $\Tor^m$, $p \in \N$, $p \not = 1$ given as follows:
\begin{equation}
f_j(r)=
\begin{cases}
\frac{j^{\eta}}{1+\eta\ln(j)} &\text{ if } r \in [0,1/j^{\eta}]
\\ \frac{1}{r(1-\ln(r))} &\text{ if } r \in (1/j^{\eta},1/j]
\\h_j(jr) &\text{ if } r \in [1/j,2/j]
\\ 1 & \text{ otherwise } 
\end{cases}
\end{equation}
where $\eta > 1$ and $h_j:[1,2] \rightarrow \R$ is a smooth, decreasing function so that $h_j(1) = \frac{j}{1+\ln(j)}$ and $h_j(2) = 1$. Then $M_j = (\Tor^m, f_j^2 g_{\Tor^m})$ converges to $\Tor^m$ in $L^q(\Tor^m)$, $\forall 1< q \le p$ and the distance function converge pointwise
\begin{align}
    d_j(q_1,q_2) \rightarrow d_{\Tor^m}(q_1,q_2), \quad \forall q_1,q_2 \in \Tor^m \setminus \Sigma^{m-p},
\end{align}
but the distance function $d_j(q_1, \cdot)$, $q_1 \in \Sigma^{m-p}$ does not converge pointwise.
\end{ex}
\begin{proof}
First we check the claim about $L^q$, $1<q \le p$ convergence where we just need to calculate the $L^p$ norm by H\"{o}lder's inequality on a compact set
    \begin{align}
&C(p,q,\Tor^m)\|f_j-1\|_{L^q(\Tor^m)}\le\|f_j-1\|_{L^p(\Tor^m)} 
\\&\le Vol(T_{\frac{1}{j^{\eta}}}(\Sigma^{m-p})) \left(\frac{j^{\eta}}{1+\eta\ln(j)}-1\right)^p +C_m\int^{1/j}_{1/j^\eta}\frac{r^{p-1}}{r^p(1-\ln(r))^p}dr
\\&= C_m \frac{\left(\frac{j^{\eta}}{1+\eta\ln(j)}-1\right)^p }{j^{\eta p}}+C_m\int_{1-\ln(1/j^{\eta})}^{1-\ln(1/j)}-\frac{1}{u^p} du 
\\&\le C_m \frac{j^{ \eta(p-p)}}{(1+\eta\ln(j))^p} +C_m ((1-\ln(1/j^{\eta}))^{1-p}- (1-\ln(1/j))^{1-p} )
\\&\rightarrow 0, \quad 1< p.
\end{align}
\begin{align}
\|f_j\|_{L^q(\Tor^m)} &\ge Vol(T_{\frac{1}{j}}(\Sigma^{m-p})) \left(\frac{j}{1+\eta\ln(j)}\right)^q 
\\&= C_m \frac{\left(\frac{j}{1+\eta\ln(j)}\right)^q }{j^p} \rightarrow \infty, \quad q > p.
\end{align}
Now notice that since $f_j \ge 1$ that we know that 
\begin{align}
    d_j(q_1,q_2) \ge d_{\Tor^m}(q_1,q_2), \quad \forall q_1,q_2 \in \Tor^m.
\end{align}
For $q_1,q_2 \in \Tor^m \setminus \Sigma^{m-p}$, $p \not =1$ we note that the flat torus geodesic between $q_1, q_2$ is contained inside $ \Tor^m \setminus \Sigma^{m-p}$ for most pairs of points.
So for $j$ chosen large enough this geodesic is contained in $\Tor^m \setminus T_{\frac{1}{j}}(\Sigma^{m-p})$ and hence
\begin{align}
    d_j(q_1,q_2) \le d_{\Tor^m}(q_1,q_2), \forall q_1,q_2 \in \Tor^m \setminus \Sigma^{m-p}.
\end{align}
In the case where the flat torus geodesic between $q_1,q_2$ is not contained in $ \Tor^m \setminus \Sigma^{m-p}$ then for any $\varepsilon > 0$ one can find a piecewise flat torus geodesic curve $\gamma_{\varepsilon}$ so that 
\begin{align}
    d_j(q_1,q_2)\le L_{g_j}(\gamma_{\varepsilon})=d_{\Tor^m}(q_1,q_2)+\varepsilon
\end{align}
 for $j$ chosen large enough and hence we have the desired pointwise convergence of distances.
If we take $q_1 \in \Sigma^{m-p}$ and $q_2$ a radial distance of $1$ from $q_1$ then we see that by definition of $f_j$ the straight line path is the shortest path between $q_1$ and $q_2$ and we can calculate
\begin{align}
d_j(q_1,q_2) &= \int_0^1f_j dr \label{DiamBoundEx3.4}
\\&= \frac{1}{1+\eta\ln(j)}+\int_{1/j^{\eta}}^{1/j} \frac{1}{r(1-\ln(r))} dr 
\\&\qquad+\int_{1/j}^{2/j} h_j(jr) dr + (1 - 2/j) 
\\&=\frac{1}{1+\eta\ln(j)}+ (1 - 2/j) +\int_{1/j}^{2/j} h_j(jr) dr
\\&\qquad+\ln(1-\ln(1/j^{\eta})) - \ln(1-\ln(1/j)) 
\\&\ge \frac{1}{1+\eta\ln(j)}+ (1 - 2/j) 
+\ln\lp\frac{1-\ln(1/j^{\eta})}{1-\ln(1/j)} \rp \rightarrow 1 + \ln(\eta).
\end{align}
A similar calculation for points $q_1 \in \Sigma^{m-p}$ and $q_2 \in \Tor^m$ shows a similar discrepancy in distances and
so we see that we do not find pointwise convergence of distances for $q_1 \in \Sigma^{m-p}$ and $q_2 \in \Tor^m$.
\end{proof}

Now we see an example which shows that a subsequence is necessary in Theorem \ref{MainTheorem}.

\begin{ex}\label{Ex: Subsequence is Needed}
Define the set
\begin{align}
    Q_k= \left\{ \lp\frac{n}{2^{k+1}}, \frac{m}{2^{k+1}}\rp: n,m = 1...2^k\right\},
\end{align}
and define a sequence of points $p_j$ by listing the points in $Q_1$ first, then the points in $Q_2$, then $Q_3$, etc. Let $r_j$ be a sequence where for each $p_j$ we define $r_j$ to be $\frac{1}{2^{k-2}}$ where $p_j$ belonged to the set $Q_k$.
Consider $M_j=(\Tor^2,g_j=f_j^2g_0)$ with conformal factor radially defined from the point $p_j$ to be
\begin{align}
f_j(r)=
    \begin{cases}
    r_j^{-1} & \text{ on } B_{r_j}(p_j)
    \\ 1 &\text{ elsewhere}
    \end{cases}.
\end{align}
Then $f_j \rightarrow 1$ in $L^p(\Tor^2)$,  $1 \le p < 2$ and for all $q_1,q_2 \in M$ we find that $d_j(q_1,q_2)$ does not converge pointwise to $d_{\Tor^2}(q_1,q_2)$ but that on a subsequence we find
\begin{align}
    d_k(q_1,q_2) \rightarrow d_{\Tor^2}(q_1,q_2), \quad \forall q_1,q_2 \in \Tor^2\setminus \{(1/2,1/2)\}.
\end{align}

\end{ex}
\begin{proof}
Notice that for every $p \in \Tor^2$ and any $n \in \N$ we can find a $j > n$ so that $p \in B_{\frac{r_j}{2}}(p_j)$ where $f_j$ is equal to $r_j^{-1}$ on $B_{r_j}(p_j)$. Then we can calculate that
\begin{align}
    d_j(p,p_j) \ge r_j^{-1} d_{\Tor^2}(p,p_j),
\end{align}
since one would connect $p$ to $p_j$ by a radial line where the conformal factor is equal to $r_j^{-1}$. In fact, for any $q \in B_{\frac{r_j}{2}}(p_j)$ we find a similar estimate for $d_j(p,q)$ and for $q \not \in B_{\frac{r_j}{2}}(p_j)$ the geodesic connecting $p$ to $q$ will have a portion which is contained in $B_{r_j}(p_j)$ where this estimate will hold and hence we do not find pointwise convergence of distances.  

    We can define a subsequence by choosing $j_k$ so that $p_{j_k}=(1/2,1/2)$ and then for every $q_1,q_2 \in \Tor^2\setminus \{(1/2,1/2)\}$ the flat torus geodesic is contained in $\Tor^2 \setminus B_{r_k}(p_k)$ for $k$ chosen large enough, or can be approximated by piecewise straight segments contained in $\Tor^2 \setminus B_{r_k}(p_k)$ for $k$ chosen large enough with length arbitrarily close to the flat torus distance between $q_1$ and $q_2$, which implies that 
    \begin{align}
    d_k(q_1,q_2) \rightarrow d_{\Tor^m}(q_1,q_2), \quad \forall q_1,q_2 \in \Tor^2\setminus \{(1/2,1/2)\}.
\end{align}

\end{proof}

We now review an example given by the first named author and C. Sormani in \cite{Allen-Sormani-2} which shows that the lower bound on distances assumed in Theorem \ref{MainTheorem} is necessary to converge to a distance function for a Riemannian metric. This example is quite general since $L^p$ convergence of Riemmanian metrics allows the metrics to disagree on sets of measure zero and curves are measure zero sets. If a shortcut develops along a curve, in a similar way to the following example, then the limit will be a metric space despite the fact that the sequence is converging in $L^p$ to a Riemannian metric. We note that one can also show the following examples converges to the limiting metric space in the volume preserving Sormani-Wenger Intrinsic Flat sense and the measured Gromov-Hausdorff sense but we want to focus on the convergence of distance functions in this paper.

 \begin{ex} \label{Ex Cinched-Sphere}  
Define functions radially from the north pole of a sphere $\Sp^m$ by  
 \be
 f_j(r)=
 \begin{cases}
 1 & r\in[0,\pi/2- 1/j]
 \\  h(jr-\pi/2) & r\in[\pi/2- 1/j, \pi/2+ 1/j]
 \\ 1 &r\in [\pi/2+ 1/j, \pi]
 \end{cases}
\ee
where $h:[-1,1]\rightarrow \R$ is a smooth even function such that 
$h(-1)=1$ with $h'(-1)=0$, 
decreasing to $h(0)=h_0\in (0,1)$ and then
increasing back up to $h(1)=1$, $h'(1)=0$. Then $M_j = (\Sp^m, f_j^2 g_{\Sp^m})$
\begin{align}
d_j(q_1,q_2) \rightarrow d_{\infty}(q_1,q_2), 
\end{align}
uniformly for all $q_1,q_2 \in M$ but $M_{\infty}$ is not isometric to $\Sp^m$. Instead $M_{\infty} = (\Sp^m, f_{\infty}^2 g_{\Sp^m}) $ is the conformal metric with conformal factor
 \be
 f_{\infty}(r)=
 \begin{cases}
 h_0 & r=\pi/2
 \\  1 &\text{ otherwise}
 \end{cases}.
 \ee
 The distances between pairs of points near the equator in this limit metric space is achieved by geodesics which run to the
 equator, and then around inside the cinched equator, and then out again.  
\end{ex}

We also review the following example of the first named author and C. Sormani \cite{Allen-Sormani-2} which shows that when one assumes $L^{p}$ convergence for $p > \frac{m}{2}$ then one should expect uniform convergence of the metric spaces. This example motivated the Morrey type inequality of the first named author \cite{Allen} which shows that one obtains H\"{o}lder control on distances when one assumes a $L^{p}$ bound for $p > \frac{m}{2}$ on a Riemannian metric. We note that one can also show the following example converges to the flat torus in the volume preserving Sormani-Wenger Intrinsic Flat sense and the measured Gromov-Hausdorff sense but we want to focus on the convergence of distance functions in this paper.

\begin{ex}\label{Ex L^p Conv}
Define a sequence of functions on $\Tor^m$ radially from a point $p \in \Tor^m$ by
\begin{equation}
f_j(r)=
\begin{cases}
j^{\alpha} &\text{ if } r \in [0,1/j]
\\h_j(jr) &\text{ if } r \in [1/j,2/j]
\\ 1 & \text{ otherwise } 
\end{cases}
\end{equation}
where $0< \alpha < 1$ and $h_j:[1,2] \rightarrow \R$ is a smooth, decreasing function so that $h_j(1) = j^{\alpha}$ and $h_j(2) = 1$. Then for $M_j = (\Tor^m, f_j^2 g_{\Tor^m})$ 
\begin{align}
    \|f_j-1\|_{L^p(\Tor^m)} \rightarrow 0 \text{ for }p < \frac{m}{\alpha},
\end{align}
 \begin{align}
    1 \le f_j,
 \end{align}
and
\begin{align}
d_j(q_1,q_2)\rightarrow d_{\Tor^m}(q_1,q_2)
\end{align}
uniformly for all $q_1,q_2 \in M$.
\end{ex}

\section{Poincare Inequalities Relating Riemannian Metrics to Distance}

In order to obtain a Poincare like estimate for metrics, we follow by analogy the proof of the Poincare inequality given on page 29 of \cite{H}. In order to do this, we will estimate the normal Jacobian term in Lemma \ref{MainToolsForProof} by a distance term:
\begin{lem}\label{Coarea Formula to Potential Integral}Let $U_{\vare}$ be a symmetric family of curves connecting two points of $M$, say $x$ and $y$. Let $\Gamma_{\vare}=\left([0,\vare]\times\mathbb{S}^{m-2},d\tau^2+\tau^2g_{\Sp^{m-2}}\right)$, which parameterizes the curves foliating $U_{\vare}$. Let $\mu_{\vare}$ denote the volume measure for $\Gamma_{\vare}$. Then, for any non-negative measurable function, say $f$, we have
\begin{equation}
    \int_{\Gamma_{\vare}\times[0,L]}fdtd\mu_{\vare}\le C(M,g_{0})\int_{U_{\vare}} f(z)\left(d_{g_0}(x,z)^{1-m}+d_{g_{0}}(y,z)^{1-m}\right)dV_{g_{0}}(z)
\end{equation}

\end{lem}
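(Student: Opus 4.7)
The plan is to change variables from parameters $(\tau, t, \vec{s}) \in \Gamma_\vare \times [0, L]$ on the parameter side to footpoints $z = \gamma(\tau, t, \vec{s}) \in U_\vare \subset M$, apply the normal-Jacobian bound from Lemma \ref{MainToolsForProof}, and then show pointwise that the resulting factor $(L\sin(\pi t/L))^{1-m}$ is dominated by $d_{g_0}(x,z)^{1-m} + d_{g_0}(y,z)^{1-m}$ via a simple triangle-inequality comparison.

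My first step is to apply the coarea formula to the projection $\bar\gamma^{-1} : U_\vare \to \Gamma_\vare$. Its fibers are the curves $t \mapsto \gamma(\tau, t, \vec{s})$, whose arc-length element is $|\gamma_t'|_{g_0}\, dt$. In the normal coordinates supplied by Lemma \ref{NorNeighRadiusEst}, $\gamma_t' = \tau\pi\cos(\pi t/L)\partial_r + \partial_s$, and the $\partial_s$ component has unit $g_0$-length along the geodesic $\alpha$; for $\vare$ small the quantity $|\gamma_t'|_{g_0}$ is therefore uniformly bounded below by a positive constant depending only on $(M, g_0)$. Applying coarea with the integrand $f(z)/|\gamma_t'|_{g_0}$ converts $\int_{\Gamma_\vare \times [0,L]} f(\gamma)\, dt\, d\mu_\vare$ into $\int_{U_\vare} f(z) |\gamma_t'|_{g_0}^{-1} NJ\bar\gamma^{-1}(z)\, dVol_{g_0}(z)$; absorbing the bounded factor $|\gamma_t'|_{g_0}^{-1}$ into the constant and invoking the estimate on $NJ\bar\gamma^{-1}$ from Lemma \ref{MainToolsForProof} yields a bound by $C(M,g_0)\int_{U_\vare} f(z)\,(L\sin(\pi t(z)/L))^{1-m}\, dVol_{g_0}(z)$, where $t(z)$ is the axial parameter of the fiber through $z$.

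The remaining task is the pointwise comparison $(L\sin(\pi t/L))^{1-m} \le C(d_{g_0}(x, z)^{1-m} + d_{g_0}(y, z)^{1-m})$. For $t \in [0, L/2]$ concavity of $\sin$ on $[0, \pi/2]$ gives $\sin(\pi t/L) \ge 2t/L$, so $L\sin(\pi t/L) \ge 2t$. In the other direction, concatenating $\alpha$ from $x$ to $\alpha(t)$ with the radial segment to $z$ produces $d_{g_0}(x, z) \le t + \tau L\sin(\pi t/L) \le (1 + \pi\vare) t$, using $\sin(u) \le u$ on $[0, \pi/2]$. Chaining these two inequalities gives $L\sin(\pi t/L) \ge c\, d_{g_0}(x, z)$ for $t \in [0, L/2]$, and the symmetric bound with $y$ in place of $x$ holds for $t \in [L/2, L]$. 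Raising to the $(1-m)$-power, summing the two cases, and enlarging the domain of integration from $U_\vare$ to $M$ (permitted since $f \ge 0$) finishes the proof.

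The main obstacle is really bookkeeping: one must verify the lower bound on $|\gamma_t'|_{g_0}$ and carry out the distance comparison cleanly, ensuring that $\vare$ is small enough for the Fermi-coordinate perturbation of $g_0$ from its Euclidean model to be absorbed into the constant $C(M, g_0)$. Both are routine under the curvature and injectivity-radius assumptions already underlying Lemma \ref{MainToolsForProof}, so I do not expect any substantive technical difficulty beyond choosing $\vare$ sufficiently small.
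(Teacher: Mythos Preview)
Your proposal is correct and follows essentially the same approach as the paper: apply the coarea/normal-Jacobian bound from Lemma \ref{MainToolsForProof} to pass from $dt\,d\mu_\vare$ to $dVol_{g_0}$ with the factor $(L\sin(\pi t/L))^{1-m}$, then use the triangle inequality and a half-interval split $t\in[0,L/2]$ versus $t\in[L/2,L]$ to dominate that factor by $d_{g_0}(x,z)^{1-m}+d_{g_0}(y,z)^{1-m}$. The only cosmetic differences are that the paper bounds $t/\sin(\pi t/L)\le L/2$ directly rather than invoking the concavity inequality $\sin(\pi t/L)\ge 2t/L$, and that you are somewhat more explicit about the $|\gamma_t'|_{g_0}^{-1}$ factor in the coarea step; neither changes the argument.
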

\begin{proof}
    Consider map from $U_{\vare}$ to $\Gamma_{\vare}$ which takes a point in $z\in U_{\vare}$ to the curve $\gamma(t)\in\Gamma_{\vare}$ it belongs to. This defines a function $t:U_{\vare}\rightarrow [0,L]$, defined by the equation $\gamma(t(z))=z$. Equation (\ref{Curve_Family_Coarea_Formula}) in Lemma \ref{MainToolsForProof} establishes the necessary coarea formula for these maps to relate the volume measure of $U_{\vare}$ to the measure $dt\otimes\mu_{\vare}$ on $[0,L]\times\Gamma_{\vare}$: for any Borel measurable function, say $f$, we have
    \begin{equation}
            \int_{\Gamma_{\vare}\times[0,L]}fdtd\mu_{\vare}(\eta)\le C(M,g_0)L^{1-m}\int_{U_{\vare}}f\sin\left(\frac{\pi}{L}t(z)\right)^{1-m}dV_{g_{0}}(z).
    \end{equation}
    From Definition \ref{SymFamilyofCurvesDef} we observe that for $z\in U_{\vare}$, we may express it in coordinates $(t,\tau,\theta)$. Recall that the coordinate $t$ is precisely the length that we have traveled along the geodesic connecting $x$ to $y$, and $\tau\sin\left(\frac{\pi}{L}t\right)$ is precisely the distance $z$ is away from the geodesic. Thus, from the triangle inequality we obtain
    \begin{equation}
        d_{g_{0}}(x,z)\le t(z)+\tau\sin\left(\frac{\pi}{L}t(z)\right),
    \end{equation}
    and
    \begin{equation}
        d_{g_{0}}(y,z)\le(L-t(z))+\tau\sin\left(\frac{\pi}{L}(L-t(z))\right).
    \end{equation}
    Let us focus on the first inequality. Since we want a comparison with the normal Jacobian in Lemma \ref{MainToolsForProof}, we will factor out the $\sin{\frac{\pi}{L}}$ term to obtain
    \begin{equation}
        d_{g_{0}}(x,z)\le\sin\left(\frac{\pi}{L}t(z)\right)\left(\frac{t(z)}{\sin\left(\frac{\pi}{L}t(z)\right)}+\tau\right).
    \end{equation}
    Now, let us assume for the moment that the $t$-coordinate of $z$ is in the range $[0,\frac{L}{2}]$. Then, we have the following bound
    \begin{equation}
        \frac{t(z)}{\sin\left(\frac{\pi}{L}t(z)\right)}\le\frac{L}{2}.
    \end{equation}
    In particular, we obtain
    \begin{equation}
        d_{g_{0}}(x,z)\le C L\sin\left(\frac{\pi}{L}t(z)\right).
    \end{equation}
    Similar reasoning shows that for $z$ with $t$-coordinate in the range $[\frac{L}{2},L]$ we have
    \begin{equation}
        d_{g_{0}}(y,z)\le C L\sin\left(\frac{\pi}{L}(L-t(z))\right).
    \end{equation}
    In particular, we see that
    \begin{equation}
        L^{1-m}\sin\left(\frac{\pi}{L}t(z)\right)^{1-m}\leq C d_{g_{0}}(x,z)^{1-m}
    \end{equation}
    for $z$ with $t$-coordinate in the range of $[0,\frac{L}{2}]$, and
    \begin{equation}
        L^{1-m}\sin\left(\frac{\pi}{L}t(z)\right)^{1-m}\leq C d_{g_{0}}(y,z)^{1-m}
    \end{equation}
    for $z$ with $t$-coordinate in the range of $[\frac{L}{2},L]$.
    If we add the right hand side of the above two inequalities together, then we see that for $z$ with any $t$-coordinate we have
    \begin{equation}
        L^{1-m}\sin\left(\frac{\pi}{L}t(z)\right)^{1-m}\leq C\left(d_{g_{0}}(x,z)^{1-m}+d_{g_{0}}(y,z)^{1-m}\right).
    \end{equation}
    Plugging the above inequalities into the coarea formula given by the normal Jacobian in Lemma \ref{MainToolsForProof} gives the result.
\end{proof}
\begin{lem}\label{lem: distance potential estimate}
    Let $(M,g_{0})$ be some fixed $m$-dimensional Riemannian manifold with metric $g_{0}$, and let $h$ be another Riemannian metric on $M$. Finally, let $d_{h}$ be the distance function determined by $h$. Then, we have
    \begin{equation}
        d_h(x,y)\le C(M,g_0)\left(\int_{M}\frac{|h|^{\frac{1}{2}}_{g_{0}}dV_{g_0}(z)}{d_{g_{0}}(x,z)^{m-1}}+\int_{M}\frac{|h|^{\frac{1}{2}}_{g_{0}}dV_{g_0}(z)}{d_{g_{0}}(y,z)^{m-1}}\right).
    \end{equation}
\end{lem}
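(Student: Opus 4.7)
The plan is to exploit the symmetric family of curves $U_{\varepsilon}$ joining $x$ and $y$ (Definition \ref{SymFamilyofCurvesDef}, Lemma \ref{MainToolsForProof}) and average the length bound over the family, then convert that averaged integral into the claimed potential integral using Lemma \ref{Coarea Formula to Potential Integral}. Concretely, for any admissible curve $\gamma(\tau,\cdot,\vec s)$ in the family, the distance $d_h(x,y)$ is bounded by the $h$-length of $\gamma$. By Lemma \ref{NormComparisonLemma}, $|\gamma'|_h \le |h|_{g_0}^{1/2}\,|\gamma'|_{g_0}$, and by Lemma \ref{MainToolsForProof} we have the uniform bound $|\gamma'|_{g_0}\le C(K,I_0,D_0)$. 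Hence for every $(\tau,\vec s)\in\Gamma_\varepsilon$,
\begin{equation}
d_h(x,y) \le C(K,I_0,D_0)\int_0^L |h|_{g_0}^{1/2}\bigl(\gamma(\tau,t,\vec s)\bigr)\,dt.
\end{equation}

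Next I would average this inequality over the parameter space $\Gamma_\varepsilon$. Since the left-hand side is independent of $(\tau,\vec s)$, integrating against $d\mu_\varepsilon$ and dividing by the total mass $\mu_\varepsilon(\Gamma_\varepsilon)$, which depends only on $\varepsilon$, gives
\begin{equation}
d_h(x,y)\;\le\;\frac{C(K,I_0,D_0)}{\mu_\varepsilon(\Gamma_\varepsilon)}\int_{\Gamma_\varepsilon\times[0,L]} |h|_{g_0}^{1/2}\bigl(\gamma(\tau,t,\vec s)\bigr)\,dt\,d\mu_\varepsilon.
\end{equation}
This is precisely the shape of integral to which Lemma \ref{Coarea Formula to Potential Integral} applies with $f=|h|_{g_0}^{1/2}$.

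Applying Lemma \ref{Coarea Formula to Potential Integral} converts the $(\tau,t,\vec s)$-integral into an integral over $M$ with the desired singular weights:
\begin{equation}
\int_{\Gamma_\varepsilon\times[0,L]}|h|_{g_0}^{1/2}\,dt\,d\mu_\varepsilon \le C(M,g_0)\int_M |h|_{g_0}^{1/2}(z)\Bigl(d_{g_0}(x,z)^{1-m}+d_{g_0}(y,z)^{1-m}\Bigr)\,dV_{g_0}(z).
\end{equation}
Combining the two displays and absorbing the $\mu_\varepsilon(\Gamma_\varepsilon)^{-1}$ factor together with the $(K,I_0,D_0)$ and $(M,g_0)$ constants into a single constant $C(\varepsilon)$ (which may also depend on the fixed background geometry $g_0$, but not on $h$, $x$, or $y$) yields the stated bound.

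The only subtle point, rather than an obstacle, is to check that the averaging is legitimate, i.e., that the $\varepsilon$ chosen to produce the symmetric family is admissible uniformly in $(x,y)$. This is handled by Lemma \ref{NorNeighRadiusEst} and the range for $\varepsilon$ identified in Lemma \ref{MainToolsForProof}, which guarantee that $\bar U_\varepsilon$ exists with $\varepsilon$ depending only on the fixed background $(M,g_0)$ and not on the pair of points being joined; hence the constant $C(\varepsilon)$ is a genuine constant. Everything else is a direct concatenation of the three preparatory lemmas.
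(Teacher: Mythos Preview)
Your proposal is correct and follows essentially the same route as the paper's proof: bound $d_h(x,y)$ by the $h$-length of each curve in the symmetric family via Lemma~\ref{NormComparisonLemma}, average over $\Gamma_\varepsilon$, and then invoke Lemma~\ref{Coarea Formula to Potential Integral} together with the uniform bound on $|\gamma'|_{g_0}$ from Lemma~\ref{MainToolsForProof}. The only cosmetic difference is that you pull the bound on $|\gamma'|_{g_0}$ out before averaging, whereas the paper carries $|\dot\eta|_{g_0}$ through the coarea step and bounds it afterward; this changes nothing of substance.
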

\begin{proof}
    Let $U_{\vare}$ be a symmetric family of curves with width $\vare$ connecting $x$ to $y$, and let $\Gamma$ be the $m-1$ Euclidean ball of radius $\vare$ which parameterizes the curves foliating $U_{\vare}$. 
    Since every curve in $\Gamma_{\vare}$ connects $x$ to $y$, we have for all $\eta$ in $\Gamma_{\vare}$
    \begin{equation}
        d_{h}(x,y)\leq\int_{[0,L]}\sqrt{h(\Dot{\eta},\Dot{\eta})}dt\le\int_{[0,L]}|h|_{g_{0}}^{\frac{1}{2}}|\Dot{\eta}|_{g_{0}}dt.
    \end{equation}
    Since $\mu_{\vare}$ is the volume measure on $\Gamma_{\vare}$, integrating both sides of the above by $\mu_{\vare}$ gives
    \begin{equation}
        \omega_{m-1}\vare^{m-1}d_{h}(x,y)\leq\int_{\Gamma_{\vare}\times[0,L]}\sqrt{h(\Dot{\eta},\Dot{\eta})}dtd\mu(\eta)\leq\int_{\Gamma_{\vare}}\int_{[0,L]}|h|_{g_{0}}^{\frac{1}{2}}|\Dot{\eta}|_{g_{0}}dtd\mu(\eta).\label{EqImportant}
    \end{equation}
    Plugging \eqref{EqImportant} into Lemma \ref{Coarea Formula to Potential Integral}, fixing an $\vare>0$, and using the fact that $|\Dot{\eta}|_{g_0}$ is uniformly bounded above, we get
    \begin{align}
        d_{h}(x,y)
        \leq  C(M,g_{0})\left(\int_{M}\frac{|h|^{\frac{1}{2}}_{g_{0}}dV_{g_0}(z)}{d_{g_{0}}(x,z)^{m-1}}+\int_{M}\frac{|h|^{\frac{1}{2}}_{g_{0}}dV_{g_0}(z)}{d_{g_{0}}(y,z)^{m-1}}\right).
    \end{align}
\end{proof}

In fact, if we are content to add an error term, which will depend on the width of the symmetric family of curves, and to assume $h\geq g_{0}$, then we can estimate $d_{h}(x,y)-d_{g_{0}}(x,y)$ within this error term. This process will involve demonstrating that there is a uniform relationship between the width of a symmetric family of curves, and the degree to which each member of the family approximates the geodesic at the center of the family. In order to establish this, we need a few properties of tubular neighborhoods about geodesics. The following is a direct application of Corollary 1.31 in Chapter 1 of \cite{Cheeger-Ebin}
\begin{lem}\label{uniform distance estimate symmetric family of curves}
    Let  $(M,g_0)$ be a  compact Riemannian manifold. Then, for any $\vare>0$ there exists a $\delta(M,g_0,\vare):=\delta=>0$ such that if $U_{\delta}$ is a symmetric family of curves about a minimal length path connecting any two points, say $x$ to $y$, then for any curve in the family $U_{\delta}$, say $c$, we have
    \begin{equation}
        l(c)\leq d_{g_{0}}(x,y)+2\vare.
    \end{equation}
\end{lem}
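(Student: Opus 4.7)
The approach is to parametrize each curve in the symmetric family via Fermi (normal) coordinates about the minimizing geodesic $\alpha$ from $x$ to $y$, and then use Jacobi field comparison to show that, for small widths $\delta$, each such curve has length uniformly close to $L := d_{g_0}(x,y)$. Fix $\vare > 0$, a pair $x, y \in M$, and a unit-speed minimizing geodesic $\alpha:[0,L]\to M$ from $x$ to $y$, so $L \le D := \diam(M,g_0)$. Let $E(t, r, \vec s) = \exp_{\alpha(t)}(r \vec s(t))$ denote the normal exponential map, where $\vec s(t)$ is obtained by parallel-transporting a fixed unit vector normal to $\alpha'(0)$ along $\alpha$. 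Every curve in $U_\delta$ takes the form $c(t) = E(t, \tau L \sin(\pi t/L), \vec s)$ for some $\tau \in [0,\delta]$ and some $\vec s$, and its tangent is
\[
c'(t) = \partial_t E + \tau \pi \cos(\pi t/L)\,\partial_r E.
\]
The generalized Gauss lemma for tubular neighborhoods gives $|\partial_r E|_{g_0} = 1$ and $g_0(\partial_t E, \partial_r E) = 0$ at every point, whence
\[
|c'(t)|_{g_0}^2 = |\partial_t E|_{g_0}^2 + \tau^2 \pi^2 \cos^2(\pi t/L).
\]

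The heart of the proof is a uniform upper bound on $|\partial_t E|_{g_0}$ in terms of $r$. For each fixed $(t_0, \vec s)$, the map $r \mapsto \partial_t E(t_0, r, \vec s)$ is a Jacobi field $J$ along the unit-speed normal geodesic $r \mapsto E(t_0, r, \vec s)$, with $J(0) = \alpha'(t_0)$ of unit norm and $J'(0) = 0$ because $\vec s$ is parallel-transported along $\alpha$. Applying Cor.~1.31 of \cite{Cheeger-Ebin} to $J$, the lower sectional curvature bound $k \le \mathrm{Sec}$ combined with the uniform injectivity-radius lower bound supplied by compactness of $M$ produces a continuous function $\eta : [0, \infty) \to [0, \infty)$ with $\eta(0) = 0$ such that $|J(r)|_{g_0} \le 1 + \eta(r)$. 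The crucial point, and the main obstacle, is that $\eta$ must depend only on $(M, g_0)$, not on $x$, $y$, $t_0$, or $\vec s$; this uniformity over all base geodesics and normal directions is exactly what compactness of $M$ together with Cor.~1.31 supplies.

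Given this, along $c$ we have $r = \tau L \sin(\pi t/L) \le \delta D$, so $|\partial_t E|_{g_0} \le 1 + \eta(\delta D)$. Using the elementary inequality $\sqrt{a^2 + b^2} \le a + b$ for $a, b \ge 0$ together with $\int_0^L |\cos(\pi t/L)|\, dt = 2L/\pi$, integration yields
\[
l(c) \le \int_0^L \bigl(1 + \eta(\delta D) + \tau \pi |\cos(\pi t/L)|\bigr)\, dt \le L + D\, \eta(\delta D) + 2 \delta D.
\]
Finally, I would choose $\delta > 0$ small enough that $D\, \eta(\delta D) + 2 \delta D \le 2\vare$, which is possible by continuity of $\eta$ at $0$ and depends only on $\vare$ and $(M, g_0)$. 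This gives $l(c) \le d_{g_0}(x, y) + 2\vare$ uniformly for every pair $x, y$ and every curve in $U_\delta$, as required.
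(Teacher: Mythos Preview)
Your proof is correct and follows essentially the same approach as the paper: both invoke Corollary~1.31 of \cite{Cheeger-Ebin} (Rauch-type comparison) together with compactness of $M$ to get a width $\delta$ that works uniformly for all pairs $x,y$. The paper phrases the comparison as $l(c_\tau)\le l(\tilde c_\tau)$ for the corresponding curve $\tilde c_\tau$ in the model space of constant curvature $k$ and then estimates $l(\tilde c_\tau)$ there, whereas you unpack this into a pointwise Jacobi-field bound $|\partial_t E|\le 1+\eta(r)$ in Fermi coordinates and integrate directly; the two arguments are equivalent, with yours being the more explicit version.
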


\begin{proof}
Since $(M,g_0)$ is compact there exist $k \in \R$ so that we have the following sectional curvature lower bound
    \begin{equation}
        k\leq\mathrm{Sec}(g_{0}).
    \end{equation}
    Let $M_k$ be the model space of constant curvature $k$ and denote the symmetric families of curves in $M_k$ by $\tilde{U}_{\delta}$.
    By Corollary 1.31 in Chapter 1 of \cite{Cheeger-Ebin}, the length of one of the curves foliating $U_{\delta}$, say $c_{\tau}$, is bounded by the length of the corresponding curve of the form
    \be
        \tilde{c}_{\tau}(t):=\exp\left(\tilde{\gamma}(t),\tau\sin(t)\theta\right)
    \ee
    in $\tilde{U}_{\delta}$. Finally, we note that since $M$ is compact, all the geodesics under consideration have length less than $\mathrm{Diam}(M,g_{0})$. As such, we can find a $\delta$ such that if $\tilde{U}_{\delta}$ is a symmetric family of curves about a geodesic, say $\tilde{\gamma}$, of length less than $\mathrm{Diam}(M)$, then
    \begin{equation}
        l(c)\leq l(\tilde{c})\leq l(\tilde{\gamma})+2\vare,
    \end{equation}
    where an error term $\vare$ is coming from two sources: the error roughly tangent to the geodesic $\tilde{\gamma}$ and the error in a direction perpendicular to $\tilde{\gamma}$.
    Since we chose $\tilde{\gamma}$ to satisfy $l(\tilde{\gamma})=l(\gamma)=d_{g_{0}}(x,y)$, we have the result.
\end{proof}


\begin{lem}\label{lem: distance potential estimate with error}
    Let $(M,g_{0})$ be a fixed Riemannian manifold with metric $g_{0}$, and let $h$ be another Riemannian metric on $M$ such that $h\geq g_{0}$. There exists a constant $C(M,g_0,\vare)>0$ such that if $U_{\vare}$ is any symmetric family of curves of width $\vare$ connecting two points, say $x$ and $y$, then we have
    \begin{equation}
        d_{h}(x,y)-d_{g_{0}}(x,y)-\vare\le C(M,g_{0},\vare)\left(\int_{M}\frac{|h-g_{0}|^{\frac{1}{2}}_{g_{0}}dV_{g_0}(z)}{d_{g_{0}}(x,z)^{m-1}}+\int_{M}\frac{|h-g_{0}|^{\frac{1}{2}}_{g_{0}}dV_{g_0}(z)}{d_{g_{0}}(y,z)^{m-1}}\right),
    \end{equation}
    where $C(M,g_0,\vare)\rightarrow\infty$ as $\vare\rightarrow0$.
\end{lem}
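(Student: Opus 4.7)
The plan is to mirror the proof of Lemma \ref{lem: distance potential estimate}, but with one crucial new ingredient: the hypothesis $h\ge g_{0}$ allows us to decompose $h=g_{0}+(h-g_{0})$ into two nonnegative symmetric bilinear forms and apply the subadditivity $\sqrt{a+b}\le\sqrt{a}+\sqrt{b}$ pointwise along each curve of the symmetric family. This separates the ``$g_{0}$-geodesic part'' of the length from the ``$h-g_{0}$ excess,'' so that the excess can be estimated exactly as in Lemma \ref{lem: distance potential estimate} while the leading $g_{0}$-length is absorbed into $d_{g_{0}}(x,y)$ plus an error.

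Concretely, I would fix $\vare>0$ and let $U_{\vare}$ be the symmetric family of width $\vare$ connecting $x$ to $y$, with parameter space $\Gamma_{\vare}$. For any curve $\eta$ foliating $U_{\vare}$, the decomposition gives
\begin{equation}
L_{h}(\eta)=\int_{0}^{L}\sqrt{g_{0}(\dot\eta,\dot\eta)+(h-g_{0})(\dot\eta,\dot\eta)}\,dt\le L_{g_{0}}(\eta)+\int_{0}^{L}|h-g_{0}|_{g_{0}}^{\frac{1}{2}}|\dot\eta|_{g_{0}}\,dt.
\end{equation}
Invoking Lemma \ref{uniform distance estimate symmetric family of curves} (used in the direction ``width $\vare$ forces length excess at most $L(\vare)$'' and relabeling this error as $\vare$ after passing through the inverse function $L$), I would bound $L_{g_{0}}(\eta)\le d_{g_{0}}(x,y)+\vare$ uniformly over $\eta\in\Gamma_{\vare}$. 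Combined with $d_{h}(x,y)\le L_{h}(\eta)$ this yields
\begin{equation}
d_{h}(x,y)-d_{g_{0}}(x,y)-\vare\le\int_{0}^{L}|h-g_{0}|_{g_{0}}^{\frac{1}{2}}|\dot\eta|_{g_{0}}\,dt.
\end{equation}
Integrating both sides against $\mu_{\vare}$ on $\Gamma_{\vare}$ (the left side picks up the factor $\omega_{m-1}\vare^{m-1}$), using the uniform bound on $|\dot\eta|_{g_{0}}$ from Lemma \ref{MainToolsForProof}, and then applying Lemma \ref{Coarea Formula to Potential Integral} to $f(z)=|h-g_{0}|_{g_{0}}^{\frac{1}{2}}(z)$, I obtain
\begin{equation}
\omega_{m-1}\vare^{m-1}\bigl(d_{h}(x,y)-d_{g_{0}}(x,y)-\vare\bigr)\le C(M,g_{0})\int_{M}|h-g_{0}|_{g_{0}}^{\frac{1}{2}}\bigl(d_{g_{0}}(x,z)^{1-m}+d_{g_{0}}(y,z)^{1-m}\bigr)\,dV_{g_{0}}.
\end{equation}
Dividing through by $\omega_{m-1}\vare^{m-1}$ produces the claimed inequality with $C(\vare)=C(M,g_{0})/(\omega_{m-1}\vare^{m-1})$, which indeed blows up as $\vare\to 0$.

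The one delicate point, and the main obstacle, is the bookkeeping of widths versus errors: Lemma \ref{uniform distance estimate symmetric family of curves} is phrased as ``given length error, choose width,'' whereas the present lemma takes width as input and delivers an error on the left-hand side. I would handle this by defining $L(\vare)$ as the inverse to the width–error correspondence supplied by Lemma \ref{uniform distance estimate symmetric family of curves}, so that a family of width $\vare$ automatically produces $g_{0}$-length excess at most $L(\vare)$; the statement then follows after renaming $L(\vare)$ as $\vare$ (or keeping it explicit). The rest of the estimate is a clean adaptation of Lemma \ref{lem: distance potential estimate}, with the subadditivity of $\sqrt{\cdot}$ being the only new analytic input.
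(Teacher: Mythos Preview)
Your proposal is correct and follows essentially the same route as the paper. The paper writes the key pointwise inequality as $\sqrt{h(\dot\eta,\dot\eta)}-\sqrt{g_{0}(\dot\eta,\dot\eta)}\le\sqrt{(h-g_{0})(\dot\eta,\dot\eta)}$, which is exactly your subadditivity $\sqrt{a+b}\le\sqrt{a}+\sqrt{b}$ rearranged; the remaining steps (invoke Lemma~\ref{uniform distance estimate symmetric family of curves} for the $g_{0}$-length error, integrate over $\Gamma$, apply Lemma~\ref{Coarea Formula to Potential Integral}, divide by $\omega_{m-1}\delta^{m-1}$) match line for line, including your observation about the width--error bookkeeping, which the paper handles by introducing a separate parameter $\delta$ for the width.
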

\begin{proof}
    Given $\vare>0$ we can use Lemma \ref{uniform distance estimate symmetric family of curves} to pick $\delta(\vare)=\delta>0$ such that if $U_{\delta}$ is a symmetric family of curves connecting any two points, say $x$ and $y$, then we have
    \begin{equation}
        \int_{[0,L]}|\Dot{\eta}|_{g_{0}}dt\leq d_{g_{0}}(x,y)+\vare,
    \end{equation}
    where $\eta$ is any of the curves foliating $U_{\delta}$. Combining the above with the fact that $d_{h}(x,y)\leq\int_{[0,l]}\sqrt{h(\Dot{\eta},\Dot{\eta})}$ gives the following inequality:
    \begin{align}
        d_{h}(x,y)-d_{g_{0}}(x,y)-\vare\leq\int_{[0,L]}\sqrt{h(\Dot{\eta},\Dot{\eta})}-\sqrt{g_{0}(\Dot{\eta},\Dot{\eta})}dt
        \\
        \le\int_{[0,L]}\sqrt{(h-g_{0})(\Dot{\eta},\Dot{\eta})}dt,
    \end{align}
    which holds since, by assumption, $|h|_{g_{0}}\geq |g_{0}|_{g_{0}}$. Now, we may estimate the last term in the above by
    \begin{equation}
        \int_{[0,L]}\sqrt{(h-g_{0})(\Dot{\eta},\Dot{\eta})}dt\leq\int_{[0,L]}|h-g_{0}|_{g_{0}}^{\frac{1}{2}}|\Dot{\eta}|_{g_{0}}dt.
    \end{equation}
    Thus, we have
    \begin{equation}
        d_{h}(x,y)-d_{g_{0}}(x,y)-\vare\leq \int_{[0,L]}|h-g_{0}|_{g_{0}}^{\frac{1}{2}}|\Dot{\eta}|_{g_{0}}dt.
    \end{equation}
    We may integrate both sides of the above by $\mu_{\delta}$ and apply Lemma \ref{Coarea Formula to Potential Integral} to get that
    \begin{equation}
        \omega_{m-1}\delta^{m-1}\left(d_{h}(x,y)-d_{g_{0}}(x,y)-\vare\right)
    \end{equation}
    is bounded by
    \begin{equation}
        C\left(M,g_{0}\right)\int_{M}|h-g_{0}|^{\frac{1}{2}}_{g_0}\left(d_{g_{0}}(x,y)^{1-m}+d_{g_{0}}(y,z)^{1-m}\right)dV_{g_0}(z).
    \end{equation}
    So, dividing both sides by $\omega_n\delta^{m-1}(\vare)$ gives
    \begin{align}
         d_{h}(x,y)-d_{g_{0}}(x,y)-\vare\leq& C(M,g_0)\delta(\vare)^{1-m}\int_{M}\frac{|h-g_{0}|^{\frac{1}{2}}_{g_{0}}dV_{g_0}(z)}{d_{g_{0}}(x,z)^{m-1}}
         \\
         &+C(M,g_0)\delta(\vare)^{1-m}\int_{M}\frac{|h-g_{0}|^{\frac{1}{2}}_{g_{0}}dV_{g_0}(z)}{d_{g_{0}}(y,z)^{m-1}}.     
    \end{align}
\end{proof}
We now turn the potential estimates above into integral estimates. However, we first need a lemma about potential functions on manifolds.
\begin{prop}\label{PotentialOperatorEstimate}
    Consider a closed Riemannian manifold $(M,g_0)$ and let $f$ be an element of $L^{p}(dVol_{g_{0}})$. Then, the function
    \begin{equation}
        \left(V_{g_{0}}f\right)(x):=\int_{M}f(z)d_{g_{0}}(x,z)^{1-m}dV_{g_{0}}(z),
    \end{equation}
    satisfies the bound
    \begin{equation}
        \|V_{g_{0}}f\|_{L^q}\leq C(M,g_{0},q)\|f\|_{L^{p}}
    \end{equation}
    for 
    \begin{equation}
        q<\frac{mp}{m-p}.
    \end{equation}
\end{prop}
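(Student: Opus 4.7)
The statement is a Hardy--Littlewood--Sobolev estimate for the Riesz-potential kernel $K(x,z) = d_{g_0}(x,z)^{1-m}$ on the compact Riemannian manifold $(M,g_0)$, and the plan is to reduce it to a generalized Young's inequality for integral operators via a three-factor Hölder inequality.

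The first step is to establish a uniform $L^r$ bound on the kernel in each variable. Using the smoothness of $g_0$ together with the compactness of $M$, volume comparison yields $\vol_{g_0}(B(x,\rho)) \le C\rho^m$ for all sufficiently small $\rho$, and a layer-cake argument then gives
\[
\sup_{x \in M}\int_M d_{g_0}(x,z)^{-\gamma}\,dV_{g_0}(z) < \infty \quad \text{for every } \gamma < m.
\]
In particular, for any $r \in [1, m/(m-1))$ I obtain $\sup_x \|K(x,\cdot)\|_{L^r} \le C_r$, and by symmetry of $d_{g_0}$ also $\sup_z \|K(\cdot,z)\|_{L^r} \le C_r$.

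Given $p$ and $q$ with $q < \frac{mp}{m-p}$, equivalently $\frac{1}{q} > \frac{1}{p} - \frac{1}{m}$, I would define $r$ by $\frac{1}{r} = 1 + \frac{1}{q} - \frac{1}{p}$, which places $r$ in $(1, m/(m-1))$ so that the kernel estimate above applies. The identity $\frac{1}{p} + \frac{1}{r} = 1 + \frac{1}{q}$ is equivalent to both $\frac{r}{p'} + \frac{r}{q} = 1$ and $\frac{p}{q} + \frac{p}{r'} = 1$, which is precisely the compatibility needed to factor
\[
K(x,z)\,|f(z)| = K(x,z)^{r/p'} \cdot \bigl(K(x,z)^{r/q}\,|f(z)|^{p/q}\bigr) \cdot |f(z)|^{1-p/q}
\]
and apply Hölder's inequality with the triple of exponents $(p', q, r')$. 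Taking this bound pointwise in $x$ and then integrating the $q$-th power, the first factor is absorbed by the uniform bound in the $x$-variable, the middle factor is handled by Fubini together with the analogous bound in the $z$-variable, and the last factor contributes $\|f\|_{L^p}^{q-p}$. The exponent arithmetic balances out to produce exactly $\|V_{g_0} f\|_{L^q} \le C(M,g_0,q)\|f\|_{L^p}$.

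There is no substantive analytic obstacle; the argument is essentially bookkeeping once the uniform kernel bound is in place. The one place requiring care is the arithmetic of exponents, where the subcritical hypothesis $q < \frac{mp}{m-p}$ enters precisely to ensure that the auxiliary exponent $r$ lies strictly below the critical threshold $m/(m-1)$ at which the kernel ceases to be uniformly $L^r$ in either variable.
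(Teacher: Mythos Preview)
Your proposal is correct and is essentially identical to the paper's proof: the paper also adapts the Gilbarg--Trudinger potential estimate by first showing $\int_M d_{g_0}(x,z)^{t(1-m)}\,dV_{g_0}$ is uniformly bounded for $t<m/(m-1)$, then defining $t$ by $t^{-1}=1+q^{-1}-p^{-1}$ (your $r$) and applying the same three-factor H\"older inequality with exponents $(q,p',t')$ followed by Fubini. The only cosmetic differences are that the paper verifies the kernel bound via polar coordinates and sectional-curvature comparison rather than layer-cake, and writes out the final $L^q$ estimate line by line.
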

\begin{proof}
    We give a similar argument to the potential estimate Lemma 7.12 in \cite{GT}. First we notice that for $R < \diam(M,g_0)$ we find
    \begin{align}
        (V_{g_0}1)&=\int_M d_{g_0}(x,z)^{1-m} dV_{g_0}(z)
        \\&=\int_{B_R(x)} d_{g_0}(x,z)^{1-m} dV_{g_0}(z)+\int_{M\setminus B_R(x)} d_{g_0}(x,z)^{1-m} dV_{g_0}(z)
        \\&\le C(m)\int_0^R r^{1-m} K^{\frac{1-m}{2}}\sinh^{m-1}(\sqrt{K}r) dr
        \\&\qquad +\int_{M\setminus B_R(x)} R^{1-m} dV_{g_0}(z)
        \\&\le C(M,g_0)R+\vol_{g_0}(M\setminus B_R(x)) R^{1-m} 
    \end{align}
    where $K<0$ is a lower bound on the sectional curvature of $(M,g_0)$. If $B_R(x)$ is no longer a topological ball then we can integrate in polar coordinates in the universal cover which will only increase the value of the integral over $B_R(x)$ with respect to $g_0$.
    Then by letting $R$ increase to the diameter we find $(V_{g_0}1) \le C(M,g_0) $.
    Then if we choose $t \ge 1$ so that 
    \begin{align}
        t^{-1}=1+q^{-1}-p^{-1},
    \end{align}
    we can obtain by a similar argument as above if $t< \frac{m}{m-1}$
    \begin{align}
        \int_M d_{g_0}(x,z)^{t(1-m)} dVol_{g_0}(z) \le C(M,g_0,t).
    \end{align} 
    Note that $t < \frac{m}{m-1}$ implies  $1+q^{-1}-p^{-1} > \frac{m-1}{m}$ and when we solve for $q$ we find $q < \frac{mp}{m-p}$.
    Now by writing 
    \begin{align}
        |f|d_{g_{0}}(x,z)^{1-m} = d_{g_{0}}(x,z)^{\frac{t}{q}(1-m)}|f|^{\frac{p}{q}} d_{g_{0}}(x,z)^{t(1-\frac{1}{p})(1-m)}|f|^{p(1-\frac{1}{t})},
    \end{align}
    and apply the generalized H\"{o}lder inequality with powers $q,\frac{1}{1-\frac{1}{p}}, \frac{1}{1-\frac{1}{t}}$ to find
    \begin{align}
        \int_{M}&|f(z)|d_{g_{0}}(x,z)^{1-m}dV_{g_{0}}(z)
        \\&\le \left(\int_{M}|f(z)|^pd_{g_{0}}(x,z)^{t(1-m)}dV_{g_{0}}(z) \right)^{\frac{1}{q}}
        \\&\cdot\left(\int_{M}d_{g_{0}}(x,z)^{t(1-m)}dV_{g_{0}}(z) \right)^{\frac{p-1}{p}}\left(\int_{M}|f(z)|^pdV_{g_{0}}(z) \right)^{\frac{t-1}{t}},
    \end{align}
    and hence we find
    \begin{align}
   & \|V_{g_0}f\|_{L^q}
      \\&= \left(\int_M \left(\int_M|f(z)|d_{g_{0}}(x,z)^{1-m}dV_{g_{0}}(z) \right)^qdV_{g_{0}}(x)\right)^{\frac{1}{q}}
        \\&\le \left(\int_{M}|f(z)|^pdV_{g_{0}}(z) \right)^{\frac{t-1}{t}}
        \\&\left(\int_M\int_{M}|f|^pd_{g_{0}}^{t(1-m)}dV_{g_{0}}(z)
        \left(\int_{M}d_{g_{0}}^{t(1-m)}dV_{g_{0}}(z) \right)^{\frac{q(p-1)}{p}}dV_{g_{0}}(x) \right)^{\frac{1}{q}}
        \\&\le \|f\|_{L^p}^{\frac{q-p}{q}} \sup_M \left(\int_{M}d_{g_{0}}(x,z)^{t(1-m)}dV_{g_{0}}(z) \right)^{\frac{p-1}{p}}
        \\&\cdot \left(\int_M|f(z)|^p\int_{M}d_{g_{0}}(x,z)^{t(1-m)}
        dV_{g_{0}}(x)dV_{g_{0}}(z) \right)^{\frac{1}{q}}
        \\&\le C(M,g_0,q)\|f\|_{L^p} .
    \end{align}
\end{proof}
\begin{lem}
    Let $(M,g_{0})$ be a fixed Riemannian manifold with metric $g_{0}$, and let $h$ be another Riemannian metric on $M$. Then, we have
    \begin{equation}
        \|d_{h}(x,y)\|_{L^{q}(M\times M, g_{0}\oplus g_{0})}\le C(M,g_0,q)\|h\|^{\frac{1}{2}}_{L^{\frac{p}{2}}(M,g_{0})},
    \end{equation}
    with $q<\frac{np}{n-p}$.
    
    Suppose $h\geq g_{0}$, and let $f_{\vare}(x,y)=\max\lbrace0,d_{h}(x,y)-d_{g_{0}}(x,y)-\vare\rbrace$. Then, we have
    \begin{equation}
        \|f_{\vare}(x,y)\|_{L^{q}(M\times M,g_{0}\oplus g_{0})}\leq C(\vare,M,g_0,q)\|h-g_{0}\|^{\frac{1}{2}}_{L^{\frac{p}{2}}(M,g_{0})},
    \end{equation}
    with $q<\frac{np}{n-p}$.
\end{lem}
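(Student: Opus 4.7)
The plan is to combine the pointwise distance estimate from \lemref{lem: distance potential estimate} with the mapping properties of the potential operator established in \propref{PotentialOperatorEstimate}. Fixing any small $\varepsilon > 0$ for which a symmetric family of curves exists uniformly between pairs of points in $M$, \lemref{lem: distance potential estimate} gives
\begin{equation}
d_h(x,y) \le C(\varepsilon, M, g_0)\left(V_{g_0}(|h|^{1/2}_{g_0})(x) + V_{g_0}(|h|^{1/2}_{g_0})(y)\right)
\end{equation}
on all of $M \times M$, so the problem reduces to an $L^q$ estimate on the potential operator applied to the function $|h|^{1/2}_{g_0}$.

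Taking $L^q(M \times M, g_0 \oplus g_0)$ norms on both sides and using the triangle inequality in $L^q$, the right-hand side splits into two terms, each with integrand depending only on one of the two variables; Fubini then gives
\begin{equation}
\|V_{g_0}(|h|^{1/2}_{g_0})(x)\|_{L^q(M\times M, g_0 \oplus g_0)} = \vol_{g_0}(M)^{1/q}\|V_{g_0}(|h|^{1/2}_{g_0})\|_{L^q(M, g_0)},
\end{equation}
and symmetrically for the $y$-term. Applying \propref{PotentialOperatorEstimate} with $f = |h|^{1/2}_{g_0}$ yields
\begin{equation}
\|V_{g_0}(|h|^{1/2}_{g_0})\|_{L^q(M, g_0)} \le C(M, g_0, q)\||h|^{1/2}_{g_0}\|_{L^p(M, g_0)} = C(M, g_0, q)\|h\|^{1/2}_{L^{p/2}(M, g_0)},
\end{equation}
in the range $q < \frac{mp}{m-p}$. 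Chaining these inequalities proves the first claim.

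For the second inequality the argument runs in parallel, using \lemref{lem: distance potential estimate with error} in place of \lemref{lem: distance potential estimate}. Under the hypothesis $h \ge g_0$, that lemma supplies the pointwise majorization $f_\varepsilon(x,y) \le C(\varepsilon)\left(V_{g_0}(|h - g_0|^{1/2}_{g_0})(x) + V_{g_0}(|h - g_0|^{1/2}_{g_0})(y)\right)$, because the right-hand side is non-negative and bounds $d_h(x,y) - d_{g_0}(x,y) - \varepsilon$ whenever the latter is positive. Taking $L^q(M \times M, g_0 \oplus g_0)$ norms and invoking \propref{PotentialOperatorEstimate} with $f = |h - g_0|^{1/2}_{g_0}$ as above gives the claimed bound. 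There is no real obstacle at this stage, since all of the substantive analytic work was done in the preceding lemmas and in \propref{PotentialOperatorEstimate}; the only point requiring any care is matching exponents, namely observing that $|h|_{g_0} \in L^{p/2}$ is equivalent to $|h|^{1/2}_{g_0} \in L^p$, which is precisely the input format of the potential operator estimate.
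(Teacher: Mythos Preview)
Your proposal is correct and follows essentially the same route as the paper: apply the pointwise potential bound from \lemref{lem: distance potential estimate} (respectively \lemref{lem: distance potential estimate with error}), take $L^q(M\times M)$ norms, separate the two variables via Fubini, and feed $|h|_{g_0}^{1/2}$ (respectively $|h-g_0|_{g_0}^{1/2}$) into \propref{PotentialOperatorEstimate}. The only cosmetic difference is that the paper splits via the inequality $(a+b)^q\le C_q(a^q+b^q)$ before integrating, whereas you invoke the $L^q$ triangle inequality directly; both lead to the same estimate.
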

\begin{proof}
    We prove only the first inequality, since the proof of the second is nearly identical. By applying Lemma \ref{lem: distance potential estimate} we can calculate that
    \begin{equation}
        \|d_{h}\|^q_{L^{q}}\leq C(\vare)\int_{M\times M}\left(\int_{M}\frac{|h|^{\frac{1}{2}}_{g_{0}}dV_{g_0}(z)}{d_{g_{0}}(x,z)^{m-1}}+\int_{M}\frac{|h|^{\frac{1}{2}}_{g_{0}}dV_{g_0}(z)}{d_{g_{0}}(y,z)^{m-1}}\right)^{q}dV_{g_0}(x)dV_{g_0}(y).
    \end{equation}
    The term on the right above may be estimated by
    \begin{equation}
        C(\vare,q)\int_{M\times M}\left(\int_{M}\frac{|h|^{\frac{1}{2}}_{g_{0}}dV_{g_0}(z)}{d_{g_{0}}(x,z)^{m-1}}\right)^{q}+\left(\int_{M}\frac{|h|^{\frac{1}{2}}_{g_{0}}dV_{g_0}(z)}{d_{g_{0}}(y,z)^{m-1}}\right)^qdV_{g_0}(x)dV_{g_0}(y).
    \end{equation}
    Next, we see that the above is bounded by
    \begin{align}
        &\vol_{g_{0}}(M)\int_{M}\left(\int_{M}\frac{|h|^{\frac{1}{2}}_{g_{0}}dV_{g_0}(z)}{d_{g_{0}}(x,z)^{m-1}}\right)^qdV_{g_0}(x)
        \\&\qquad +\vol_{g_{0}}(M)\int_{M}\left(\int_{M}\frac{|h|^{\frac{1}{2}}_{g_{0}}dV_{g_0}(z)}{d_{g_{0}}(y,z)^{m-1}}\right)^qdV_{g_0}(y)
        \\&=2\vol_{g_0}(M)\|(V_{g_0}|h|_{g_0}^{\frac{1}{2}})\|_{L^q}^q,
    \end{align}
    and so by applying Proposition \ref{PotentialOperatorEstimate} when $q < \frac{m-p}{mp}$ we see that
    \begin{equation}
        \|d_{h}(x,y)\|_{L^{q}(M\times M, g_{0}\oplus g_{0})}\le C(M,g_0,q)\|h\|^{\frac{1}{2}}_{L^{\frac{p}{2}}(M,g_{0})},
    \end{equation}
    where we are allowed to fix an $\varepsilon$ in order to remove the dependence on $\varepsilon$ in the constant. When repeating this argument for $f_{\varepsilon}$, the $\varepsilon$ dependence remains throughout the inequality.
\end{proof}
The following proposition is a minor adaptation of Frostman's lemma to a Riemannian manifold.
\begin{prop}\label{Manifold Frostman's Lemma}
    Let $(M,g_{0})$ be a compact closed Riemannian manifold, let $s>0$, and suppose $A$ is a Borel set such that $H^{s}_{g_{0}}(A)>0$. Then, there exists a probability measure, say $\mu$, supported in a subset of $A$, and a positive constant $c$ such that for all $x$ in the support of $\mu$ we have
    \begin{equation}
        \mu\left(B_{r}(x)\right)\leq cr^{s}, \quad \text{ for all } 0 < r < \diam(M,g_0).
    \end{equation}
\end{prop}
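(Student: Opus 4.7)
The plan is to reduce the statement to the classical Frostman lemma in $\mathbb{R}^m$ via coordinate charts, exploiting the fact that a Riemannian manifold is locally bi-Lipschitz to Euclidean space with uniform constants when restricted to a fixed finite atlas.

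First, since $M$ is compact, I would fix a finite cover by normal coordinate charts $\phi_i : U_i \to V_i \subset \mathbb{R}^m$, $i=1,\dots,N$, chosen so that each chart is centered at a point with chart radius at most $\inj(M,g_{0})/2$. On such a chart, $\phi_{i}$ is $C$-bi-Lipschitz between $(U_{i}, g_{0})$ and $(V_{i}, g_{\mathrm{Eucl}})$ with a uniform constant $C$, and for small enough radii the $g_{0}$-geodesic balls with center in $U_{i}$ are contained in $U_{i}$ and coincide with the intrinsic balls there. By countable subadditivity of $H^{s}_{g_{0}}$ together with the hypothesis $H^{s}_{g_{0}}(A) > 0$, at least one chart (WLOG $U_{1}$) satisfies $H^{s}_{g_{0}}(A \cap U_{1}) > 0$, and the bi-Lipschitz property transfers this to $H^{s}_{\mathrm{Eucl}}(\phi_{1}(A \cap U_{1})) > 0$.

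Second, I would apply the classical Frostman lemma in $\mathbb{R}^{m}$ (for instance, Theorem 8.8 of Mattila) to the Borel set $A' := \phi_{1}(A \cap U_{1})$, after first passing to a compact subset of positive Hausdorff $s$-measure using Borel regularity. This produces a nonzero finite Borel measure $\tilde\nu$ supported in a subset of $A'$ with $\tilde\nu(B^{\mathrm{Eucl}}_{r}(y)) \le c_{0} r^{s}$ for every $y \in \mathbb{R}^{m}$ and every $r > 0$. Normalize to a probability measure $\nu := \tilde\nu / \tilde\nu(\mathbb{R}^{m})$; the ball condition persists with a new constant, which I continue to call $c_{0}$.

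Third, I would pull $\nu$ back to $M$ by setting $\mu := (\phi_{1}^{-1})_{\ast}\nu$. This is a probability measure with $\operatorname{supp}(\mu) \subset \phi_{1}^{-1}(\operatorname{supp}(\nu)) \subset A \cap U_{1} \subset A$. Since $\operatorname{supp}(\mu)$ is a compact subset of the open set $U_{1}$, there exists $r_{0} > 0$ such that $B^{g_{0}}_{r}(x) \subset U_{1}$ for every $x \in \operatorname{supp}(\mu)$ and every $r \le r_{0}$. For such $x$ and $r$, the bi-Lipschitz property of $\phi_{1}$ gives
\begin{equation}
\mu(B^{g_{0}}_{r}(x)) \le \nu(B^{\mathrm{Eucl}}_{Cr}(\phi_{1}(x))) \le c_{0} C^{s} r^{s}.
\end{equation}
For $r > r_{0}$ the trivial bound $\mu(B^{g_{0}}_{r}(x)) \le 1 \le r_{0}^{-s} r^{s}$ applies. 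Taking $c := \max(c_{0} C^{s}, r_{0}^{-s})$ finishes the argument.

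There is no serious obstacle here beyond bookkeeping: the proof is essentially the observation that Frostman's lemma is a local Euclidean statement which is invariant under bi-Lipschitz changes of coordinates, and so carries over to compact Riemannian manifolds via a finite atlas. The only mild care needed is in handling the range $r > r_{0}$ (so that the ball inequality holds uniformly in $r$, not just for small $r$) and in ensuring that the bi-Lipschitz comparison of balls applies where it is used, which is why the cover is chosen with a radius controlled by the injectivity radius.
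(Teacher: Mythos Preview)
Your proposal is correct and follows essentially the same route as the paper: cover $M$ by finitely many bi-Lipschitz charts, use subadditivity to locate a chart in which $A$ has positive $H^{s}$-measure, apply the Euclidean Frostman lemma (Mattila, Theorem~8.8) there, and push the resulting measure back to $M$ via the chart. The paper's argument is organized around the same steps; if anything, you are slightly more explicit about two points the paper leaves implicit---namely, controlling the chart radius by the injectivity radius so that small $g_{0}$-balls coincide with chart balls, and handling the range $r>r_{0}$ by the trivial bound $\mu(M)=1$.
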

\begin{proof}
    Since $M$ is compact, we can find a finite collection of charts on $M$, say
    \begin{equation}
        \lbrace\phi_{i}:B_{r_i}(0)\subset\R^{m}\rightarrow B_{r_i}(x_{i})\subset M\rbrace^{N}_{i=0},
    \end{equation}
    such that there exists a positive constant $C$ with the property that
    \begin{equation}\label{coordinate metric comparison}
        \frac{1}{C^2}\leq\frac{\phi_{i}^{*}g_{0}}{g_{E}}\leq C^2,
    \end{equation}
    and $B_i=\phi_{i}\left(B_{\frac{r_i}{4}}(0)\right)$ covers $M$, where $g_{E}$ is the flat background metric on the coordinates.
    
    Since $A=\bigcup^{N}_{i=0}B_{i}\bigcap A$, we have
    \begin{equation}
        H^{s}_{g_{0}}(A)\leq\sum^{N}_{i=0}H^{s}_{g_0}\left(B_i\bigcap A\right).
    \end{equation}
    Furthermore, since each $B_{i}$ is a closed set, each $B_{i}\bigcap A$ is Borel. Without loss of generality, we may assume that $H^{s}_{g_0}(B_{0}\bigcap A)>0$. This gives us
    \begin{equation}
        0<H^{s}_{g_0}\left(B_{0}\bigcap A\right)=H^{s}_{\phi^{*}_{0}g_{0}}\left(\phi^{-1}_{0} \left(B_{0}\bigcap A\right)\right)\leq C^{s} H^{s}_{g_{E}}\left(\phi^{-1}_{0} \left(B_{0}\bigcap A\right)\right).
    \end{equation}
    Thus, we may apply the usual Frostman's lemma, see Theorem 8.8 in \cite{M}, to the set $\phi^{-1}_{0}\left(B_{0}\bigcap A\right)$ to obtain a probability measure supported on a subset of
    \begin{equation}
        \phi^{-1}_{0}\left(B_{0}\bigcap A\right)\subset B_{\frac{r_0}{4}}(0),
    \end{equation}
    say $\tilde{\mu}$, and a constant $c>0$ such that for any $x$ in the support of $\tilde{\mu}$ we have
    \begin{equation}
        \tilde{\mu}\left(B^{g_{E}}_{r}(x)\right)\leq cr^{s}.
    \end{equation}
    Furthermore, by definition, for every $x$ in the support of $\tilde{\mu}$, we have that $B^{g_{E}}_{\frac{r_0}{2}}(x)$ contains the support of $\tilde{\mu}$:
    \begin{equation}
        \tilde{\mu}\left(B^{g_{E}}_{\frac{r_0}{2}}(x)\right)=1.
    \end{equation}
    Now, from Equation (\ref{coordinate metric comparison}), we have
    \begin{equation}
        B^{g_{E}}_{\frac{r}{C}}\subset B^{\phi^{*}_{0}g_0}_{r}(x)\subset B^{g_{E}}_{Cr}.
    \end{equation}
    Thus, we can conclude that
    \begin{equation}
        \tilde{\mu}\left( B^{\phi^{*}_{0}g_0}_{r}(x)\right)\leq\tilde{\mu}\left(B^{g_{E}}_{Cr}\right)\leq c C^{s}r^{s},
    \end{equation}
    for all $x$ in the support of $\tilde{\mu}$. Finally, we let $\mu=\phi^{\#}_{0}\tilde{\mu}$.
\end{proof}
Now, we need to understand how the probability measures constructed above behave with respect to the potentials we are working with.
\begin{lem}\label{L1 bounded map}
    Let $\mu$ be a probability measure such that for all $x$ in the support of $\mu$ we have
    \begin{equation}
        \mu(B_{r}(x))\le \kappa r^s,
    \end{equation}
    where $\kappa$ is nonnegative, and $s$ is strictly positive. Let $0\le t<s$, then the map
    \begin{equation}
        f:\rightarrow\int_M\frac{f(y)}{d_{g_{0}}(x,y)^t}dV_{g_{0}}(y)
    \end{equation}
    is a bounded map from $L^{1}(dVol_{g_{0}})$ to $L^{1}(\mu)$:
    \begin{equation}
        \int_M\left|\int_M\frac{f(y)}{d_{g_{0}}(x,y)^t}dV_{g_{0}}(y)\right|d\mu(x)\le C(s,t,\kappa)\|f\|_{L^{1}(dVol_{g_{0}})},
    \end{equation}
    Where $C(s,t)$ tends to infinity as $t$ tends towards $s$.
\end{lem}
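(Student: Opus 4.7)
The plan is to apply Fubini's theorem to exchange the order of integration and thereby reduce the statement to a uniform potential-type bound
\[
\sup_{y\in M}\int\frac{d\mu(x)}{d_{g_{0}}(x,y)^{t}}\le C(s,t),
\]
with $C(s,t)\to\infty$ as $t\to s$. Once this is established, Tonelli gives
\[
\int\left|\int\frac{f(y)}{d_{g_{0}}(x,y)^{t}}\,dVol_{g_{0}}(y)\right|d\mu(x)\le \int|f(y)|\left(\int\frac{d\mu(x)}{d_{g_{0}}(x,y)^{t}}\right)dVol_{g_{0}}(y),
\]
which is at most $C(s,t)\,\|f\|_{L^{1}(dVol_{g_{0}})}$, as required.

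Before computing the potential, I would transfer the hypothesized polynomial growth from $\mathrm{supp}(\mu)$ to arbitrary centers $y\in M$. If $B_{r}(y)\cap\mathrm{supp}(\mu)=\emptyset$ then $\mu(B_{r}(y))=0$; otherwise, picking any $x_{0}\in B_{r}(y)\cap\mathrm{supp}(\mu)$ and using $B_{r}(y)\subset B_{2r}(x_{0})$ together with the hypothesis gives $\mu(B_{r}(y))\le 2^{s}c\,r^{s}$ for every $y\in M$ and every $r>0$.

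I would then apply the layer-cake formula to $x\mapsto d_{g_{0}}(x,y)^{-t}$. Its superlevel set at height $\lambda$ is exactly the open ball $B_{\lambda^{-1/t}}(y)$, so the substitution $r=\lambda^{-1/t}$ (valid for $t>0$; the case $t=0$ is immediate from $\mu(M)=1$) gives
\[
\int\frac{d\mu(x)}{d_{g_{0}}(x,y)^{t}}=\int_{0}^{\infty}\mu(B_{\lambda^{-1/t}}(y))\,d\lambda=t\int_{0}^{\infty}r^{-t-1}\,\mu(B_{r}(y))\,dr.
\]
Splitting the last integral at $r=D:=\diam(M,g_{0})$, the center-transferred growth bound controls the $(0,D)$ piece by $2^{s}c\,t\int_{0}^{D}r^{s-t-1}\,dr=\tfrac{2^{s}c\,t}{s-t}D^{s-t}$, which converges at $0$ precisely because $t<s$, while the trivial bound $\mu\le 1$ handles the tail $(D,\infty)$ by $D^{-t}$. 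Collecting,
\[
C(s,t)=\frac{2^{s}c\,t}{s-t}D^{s-t}+D^{-t},
\]
whose first summand manifestly diverges as $t\to s$.

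The only mildly delicate step is the center-transfer; the hypothesis $t<s$ is used exactly once, to secure integrability of $r^{s-t-1}$ at the origin, and everything else is a routine Fubini/layer-cake calculation.
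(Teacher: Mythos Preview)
Your proof is correct and follows essentially the same route as the paper: Fubini--Tonelli to swap the integrals, then the layer-cake formula for $\int d_{g_{0}}(x,y)^{-t}\,d\mu(x)$, splitting at the diameter and using the growth bound on one piece and $\mu(M)=1$ on the other. The only difference is that you include an explicit center-transfer step (extending the growth bound from $x\in\mathrm{supp}(\mu)$ to arbitrary centers $y\in M$ via $B_{r}(y)\subset B_{2r}(x_{0})$), a detail the paper leaves implicit.
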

\begin{proof}
    We may assume without loss of generality that $f$ is nonnegative. Thus, using the Fubini-Tonelli theorem, we have
    \begin{equation}
        \int_M\int_M\frac{f(y)}{d_{g_{0}}(x,y)^t}dV_{g_{0}}(y)d\mu(x)=\int_M f(y)\int_M\frac{1}{d_{g_{0}}(x,y)^t}d\mu(x)dV_{g_{0}}(y).
    \end{equation}
    Now, using the layer cake formula for integrating, we have
    \begin{equation}
        \int_{M}\frac{1}{d_{g_{0}}(x,y)^{t}}d\mu(x)=\int^{\infty}_{0}\mu\left(\lbrace x: d_{g_{0}}(x,z)^{-t}\ge u\rbrace\right)du.
    \end{equation}
    Now, observe that $\lbrace x:d_{g_{0}}(x,z)^{-t}\ge u\rbrace=\lbrace x:d_{g_{0}}(x,z)\le u^{\frac{-1}{t}}\rbrace=B(z,u^{\frac{-1}{t}})$. So, the above becomes
    \begin{equation}
        \int^{\infty}_{0}\mu\left(B(z,u^{\frac{-1}{t}})\right)du.
    \end{equation}
    Since $M$ has finite diameter, we observe that for all $u\le \text{diam}(M)^{-t}$, we have $\mu(B(z,u^{\frac{-1}{t}}))=1$. Thus, the above is bounded by
    \begin{equation}
        \text{diam}(M)^{-t}+\kappa \int^{\infty}_{\text{diam}(M)^{-t}}u^{\frac{-s}{t}}du,
    \end{equation}
    which is bounded since $s>t$. Thus, we see that
    \begin{equation}
        \int_M\int_M\frac{f(y)}{d_{g_{0}}(x,y)^{t}}dV_{g_{0}}(y)d\mu(x)\le C(s,t,\kappa)\|f_{j}\|_{L^{1}(M,g_{0})}.
    \end{equation}
\end{proof}

\begin{lem}\label{finite L1 sum Hausdorff estimate}
    Let $f_j$ be non-negative Borel measurable functions such that 
    \begin{equation}
        \sum^{\infty}_{j=1}\|f_{j}\|_{L^{1}(dV_{g_{0}})}<\infty,   
    \end{equation}
    and let $t > 0$ and $B(\delta)$ be defined by 
    \begin{equation}
        B(\delta)=\left\lbrace x:\limsup_{j\rightarrow\infty}\int_M\frac{f_{j}(y)}{d_{g_{0}}(x,y)^{t}}dV_{g_{0}}(y)\ge\delta\right\rbrace.
    \end{equation}
    Then, the Hausdorff dimension of $B(\delta)$ with respect to $g_{0}$ is less than or equal to $t$.
\end{lem}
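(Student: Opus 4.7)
The plan is to argue by contradiction using Frostman's lemma (Proposition \ref{Manifold Frostman's Lemma}) together with the $L^1$-boundedness result of Lemma \ref{L1 bounded map}. Suppose, for contradiction, that the Hausdorff dimension of $B(\delta)$ strictly exceeds $t$. Then there exists $s$ with $t < s$ and $H^s_{g_0}(B(\delta)) > 0$. Since each of the sets defining $B(\delta)$ is Borel (as a level set of a Borel-measurable function built from $f_j$), $B(\delta)$ is Borel and Frostman's lemma produces a probability measure $\mu$ supported on a subset of $B(\delta)$ that satisfies $\mu(B_r(x)) \le c r^{s}$ for every $x \in \mathrm{spt}(\mu)$.

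Next, since $0 \le t < s$, Lemma \ref{L1 bounded map} applies with this $\mu$, giving
\begin{equation}
\int_M \int_M \frac{f_j(y)}{d_{g_0}(x,y)^t}\,dV_{g_0}(y)\,d\mu(x) \le C(s,t)\,\|f_j\|_{L^1(dV_{g_0})}
\end{equation}
for every $j$. Summing in $j$ and applying Tonelli (all integrands are nonnegative) yields
\begin{equation}
\int_M \sum_{j=1}^\infty \int_M \frac{f_j(y)}{d_{g_0}(x,y)^t}\,dV_{g_0}(y)\,d\mu(x) \le C(s,t)\sum_{j=1}^\infty \|f_j\|_{L^1(dV_{g_0})} < \infty.
\end{equation}
Hence, for $\mu$-a.e. $x$, the series $\sum_j \int f_j(y)\,d_{g_0}(x,y)^{-t}\,dV_{g_0}(y)$ converges, which forces the individual terms to tend to $0$ and therefore
\begin{equation}
\limsup_{j\to\infty} \int_M \frac{f_j(y)}{d_{g_0}(x,y)^t}\,dV_{g_0}(y) = 0 \qquad \text{for $\mu$-a.e. } x.
\end{equation}

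This contradicts the fact that $\mathrm{spt}(\mu) \subset B(\delta)$, where by definition the same $\limsup$ is at least $\delta > 0$. The conclusion is that no such $s > t$ can have $H^s_{g_0}(B(\delta)) > 0$, so $\dim_{\mathcal{H}_{g_0}}(B(\delta)) \le t$. The main conceptual step is the Frostman construction coupled with the $L^1$ estimate; once one sees that the $L^1$ estimate delivers summability of $\mu$-integrals at the critical exponent $t$ whenever Frostman gives mass distribution of dimension $s > t$, the Borel–Cantelli-style contradiction with $B(\delta)$ is automatic. The mild technical wrinkle is verifying that $B(\delta)$ is Borel so that Frostman's lemma is applicable; this follows because $\limsup$ of Borel functions is Borel and $x \mapsto \int f_j(y)\,d_{g_0}(x,y)^{-t}\,dV_{g_0}(y)$ is lower semicontinuous by Fatou, hence Borel.
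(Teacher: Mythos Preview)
Your proof is correct and follows the same strategy as the paper: argue by contradiction, apply Frostman's lemma (Proposition \ref{Manifold Frostman's Lemma}) to produce a probability measure with the appropriate mass bound, and combine with the $L^{1}$ estimate of Lemma \ref{L1 bounded map} to reach a contradiction via summability. The only cosmetic differences are that you sum the $L^{1}$ bounds and invoke ``convergent series $\Rightarrow$ terms tend to $0$'' where the paper bounds the $\limsup$ by a tail sum $\sum_{k\ge l}\|f_k\|_{L^1}$, and you (correctly and slightly more carefully) apply Frostman at an exponent $s$ strictly greater than $t$ before invoking Lemma \ref{L1 bounded map}.
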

\begin{proof}
    Since we will need to apply Frostman's Lemma, it is important to show that the set $B(\delta)$ is Borel. To see that $B(\delta)$ is Borel, first observe that the function $\frac{f_j}{d_{g_{0}}(x,y)^t}$ is Borel, since both the numerator and the denominator are Borel measurable. Thus, by the Fubini-Tonelli theorem, it follows that
    \begin{equation}
        x\rightarrow\int_M\frac{f_j}{d_{g_{0}}(x,y)^t}dV_{g_{0}}(y)
    \end{equation}
    is a Borel measurable function. It is standard that taking the limit supremum of measurable functions is itself measurable, see for example Proposition 2.7 in \cite{F}. Thus, it follows that $B(\delta)$ is in fact a Borel set.
    
    Suppose $\mathrm{dim}_{H_{g_{0}}}\left(B(\delta)\right)=s_0>t$, then it follows by definition that $\mathcal{H}^{t}_{g_{0}}\left(B(\delta)\right)=\infty$. Since $B(\delta)$ is Borel, we may appeal to the version of Frostman's lemma in Proposition \ref{Manifold Frostman's Lemma} in order to find a Radon probability measure, say $\mu$, whose support is contained in $B(\delta)$, and which satisfies
    \begin{equation}
        \mu(B_{r}(x))\le cr^{t},
    \end{equation}
    for all $x$ in $B(\delta)$. By definition of $B(\delta)$, we have for every $x$ in $B(\delta)$ that
    \begin{equation}
        \delta\le\limsup_{j\rightarrow\infty}\int_M\frac{f_{j}}{d_{g_{0}}(x,y)^{t}}dV_{g_{0}}\le\int_M\frac{1}{d_{g_{0}}(x,y)^{t}}\sum^{\infty}_{k=l}f_{k}(y)dV_{g_{0}}(y),
    \end{equation}
    for all $l$. Integrating the above by $\mu$, and using Lemma \ref{L1 bounded map}, we get
    \begin{equation}
        \delta\le C(s,t)\sum_{k=l}^{\infty}\|f_{k}\|_{L^{1}(dV_{g_{0}})},
    \end{equation}
    for all $l$. This is a contradiction, since the right hand side can be made arbitrarily small by sending $l$ to infinity. Thus, we see that $s_0\leq t$.
\end{proof}
\begin{lem}\label{applied reverse hoelder ineq}
    Let $\delta>0$, and let $x$ be in $M$ and suppose that $f$ is a non-negative function such that
    \begin{equation}
        \int_{M}\frac{f}{d_{g_{0}}(x,y)^{m-1}}dV_{g_0}\ge\delta.
    \end{equation}
    Then, for $p > 1$, $t>m-p$ we have that
    \begin{equation}
        \int_M\frac{f^p}{d_{g_{0}}(x,y)^t}dV_{g_0}\ge C(g_{0},t,p)\delta^p,
    \end{equation}
    where $C(g_{0},t,p)$ only depends on $g_{0}$, $t$, and $p$.
\end{lem}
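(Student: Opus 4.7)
The plan is to apply H\"{o}lder's inequality to the hypothesized lower bound, with the uniform bound on the Riesz-type potential $\int_M d_{g_0}(x,y)^{-s}\,dV_{g_0}(y) \le C(g_0,s)$ for $s<m$ (the same bound derived in the proof of Proposition \ref{PotentialOperatorEstimate}) supplying the remainder of the estimate. Concretely, for $p>1$, I would split
\begin{equation*}
    \frac{f(y)}{d_{g_{0}}(x,y)^{m-1}} \;=\; \frac{f(y)}{d_{g_{0}}(x,y)^{t/p}} \cdot \frac{1}{d_{g_{0}}(x,y)^{m-1-t/p}},
\end{equation*}
and apply H\"{o}lder's inequality with conjugate exponents $p$ and $p'=p/(p-1)$ to obtain
\begin{equation*}
    \delta \;\le\; \left(\int_{M} \frac{f^{p}}{d_{g_{0}}(x,y)^{t}}\,dV_{g_{0}}(y)\right)^{\!1/p} \left(\int_{M} \frac{dV_{g_{0}}(y)}{d_{g_{0}}(x,y)^{(m-1-t/p)p'}}\right)^{\!1/p'}.
\end{equation*}

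Next I would verify the algebraic identity: $(m-1-t/p)\,p/(p-1) < m$ is equivalent to $t > m-p$. This is exactly the integrability threshold for a negative power of $d_{g_0}$ against $dV_{g_0}$ on an $m$-dimensional compact Riemannian manifold, so under the hypothesis $t>m-p$ the same polar coordinates / comparison geometry computation from the proof of Proposition \ref{PotentialOperatorEstimate} bounds the second factor above by a constant depending only on $g_0$, $t$, and $p$, uniformly in $x\in M$. Raising both sides to the $p$-th power and rearranging then yields $\int_M f^p / d_{g_{0}}(x,y)^{t}\,dV_{g_0}(y) \ge C(g_{0},t,p)\,\delta^{p}$, as desired.

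The degenerate case $p=1$ requires no H\"{o}lder at all: since $t>m-1$ and $d_{g_{0}}(x,y)\le \diam(M,g_{0})$, the pointwise inequality $d_{g_{0}}(x,y)^{m-1-t} \ge \diam(M,g_{0})^{m-1-t}$ (valid because the exponent is negative) rearranges to $d_{g_{0}}(x,y)^{-t} \ge \diam(M,g_{0})^{m-1-t}\,d_{g_{0}}(x,y)^{-(m-1)}$, and integrating against $f$ gives the claim immediately. I do not expect any serious obstacle here: the heart of the matter is simply the observation that the borderline exponent $t=m-p$ on the left is exactly H\"{o}lder-dual to the borderline integrability exponent $s=m$ for the potential $d_{g_{0}}^{-s}$ on the right, so the admissible range of $t$ stated in the lemma is precisely the range for which the argument closes.
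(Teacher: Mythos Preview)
Your proposal is correct and is essentially the same argument as the paper's: applying H\"{o}lder with exponents $p$ and $p'$ to the hypothesized integral and then checking that $(m-1-t/p)p'<m$ is equivalent to $t>m-p$ is exactly the ``reverse H\"{o}lder'' step the paper carries out (the paper just writes the resulting inequality in rearranged form, with the factor $\bigl(\int d_{g_0}^{-(p(m-1)-t)/(p-1)}\bigr)^{1-p}$). Your separate treatment of the endpoint $p=1$ via the pointwise bound $d_{g_0}^{-t}\ge \diam(M,g_0)^{m-1-t}\,d_{g_0}^{-(m-1)}$ is a nice touch that the paper's formulation (which divides by $p-1$) does not explicitly address.
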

\begin{proof}
    The proof follows from a simple application of a reverse H{\"o}lder inequality:
    \begin{align}
        \int_{M}\frac{f^p(z)}{d_{g_{0}}(x,z)^t}dV_{g_0}(z)&=\int_{M}\frac{f^p(z)}{d_{g_{0}}(x,z)^{p(m-1)}}d_{g_{0}}(x,z)^{p(m-1)-t}dV_{g_0}(z)
        \\
        &\ge\left(\int_{M}\frac{f(z)}{d_{g_{0}}(x,z)^{m-1}}dV_{g_0}(z)\right)^{p}
        \\&\quad \cdot\left(\int_{M} d_{g_{0}}(x,z)^{-\frac{1}{p-1}(p(m-1)-t)}dV_{g_0}(z)\right)^{1-p}.
    \end{align}
    We now observe that for $t>m-p$, we have that
    \begin{equation}
        \frac{p(m-1)-t}{p-1}<m.
    \end{equation}
    Thus, since $p-1>0$ it follows that
    \begin{equation}
        \int_M d_{g_{0}}(x,z)^{\frac{-1}{p-1}(p(m-1)-t)}dV_{g_0}(z)\le C(g_{0},t,p)<\infty.
    \end{equation}
\end{proof}

\begin{prop}\label{Lp Hausdorff dimension control}
    Let $\delta>0$, and let $f_{j}$ be a sequence of non-negative functions tending to zero in $L^{p}(g_{0})$.
    If $f_{j}$ is such that
    \begin{equation}
        \sum^{\infty}_{j=1}\|f_{j}\|^p_{L^{p}(dV_{g_{0}})}<\infty,
    \end{equation}
    and $B(\delta)$ is defined by 
    \begin{equation}
        B(\delta)=\left\lbrace x:\limsup_{j\rightarrow\infty}\int_M\frac{f_{j}(y)}{d_{g_{0}}(x,y)^{m-1}}dV_{g_{0}}(y)\ge\delta\right\rbrace.
    \end{equation}
    then we have
    \begin{equation}
        \mathrm{dim}_{H_{g_{0}}}\left(B(\delta)\right)\le m-p.
    \end{equation}
\end{prop}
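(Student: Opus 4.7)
The plan is to reduce Proposition \ref{Lp Hausdorff dimension control} to Lemma \ref{finite L1 sum Hausdorff estimate} applied to the sequence $f_j^p$ in place of $f_j$. The bridge is Lemma \ref{applied reverse hoelder ineq}, which converts a lower bound on $\int f_j/d_{g_0}(x,y)^{m-1}\,dVol_{g_0}$ into a lower bound on $\int f_j^p/d_{g_0}(x,y)^t\,dVol_{g_0}$ whenever $t>m-p$; together with the tautology $\|f_j^p\|_{L^1(dVol_{g_0})}=\|f_j\|_{L^p(dVol_{g_0})}^p$, this turns both the hypothesis and the potential bound into exactly the form required by Lemma \ref{finite L1 sum Hausdorff estimate}.

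Concretely, I would fix an arbitrary $t>m-p$. For $x\in B(\delta)$, the definition of $\limsup$ guarantees a subsequence $j_k\to\infty$ with
\[
\int_M\frac{f_{j_k}(y)}{d_{g_0}(x,y)^{m-1}}\,dVol_{g_0}(y)\ge\tfrac{\delta}{2},
\]
and applying Lemma \ref{applied reverse hoelder ineq} pointwise at $x$ to each $f_{j_k}$ yields
\[
\int_M\frac{f_{j_k}^p(y)}{d_{g_0}(x,y)^t}\,dVol_{g_0}(y)\ge C(g_0,t,p)\bigl(\tfrac{\delta}{2}\bigr)^p.
\]
Taking $\limsup$ in $j$ shows $B(\delta)\subseteq\widetilde B_t(\eta)$, where $\eta:=C(g_0,t,p)(\delta/2)^p$ and $\widetilde B_t(\eta)$ denotes the set produced by Lemma \ref{finite L1 sum Hausdorff estimate} when that lemma is run with the sequence $f_j^p$, the exponent $t$, and the threshold $\eta$.

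Because $\sum_j\|f_j^p\|_{L^1(dVol_{g_0})}=\sum_j\|f_j\|_{L^p(dVol_{g_0})}^p<\infty$ by hypothesis, Lemma \ref{finite L1 sum Hausdorff estimate} is applicable and yields $\mathrm{dim}_{H_{g_0}}\widetilde B_t(\eta)\le t$. Monotonicity of Hausdorff dimension then gives $\mathrm{dim}_{H_{g_0}}B(\delta)\le t$ for every $t>m-p$, and letting $t\downarrow m-p$ completes the proof.

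I anticipate no substantive obstacle in this argument: the whole proof is a clean concatenation of the two preceding lemmas, once one recognizes that the natural dictionary $(f_j,m-1)\leftrightsquigarrow(f_j^p,t)$ is precisely what the reverse H\"older inequality of Lemma \ref{applied reverse hoelder ineq} provides. The only minor point requiring care is the subsequence extraction from the $\limsup$ inequality, which is purely a matter of unwinding definitions.
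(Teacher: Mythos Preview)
Your proposal is correct and follows essentially the same route as the paper's own proof: apply Lemma \ref{applied reverse hoelder ineq} to pass from the potential with exponent $m-1$ and function $f_j$ to the potential with exponent $t>m-p$ and function $f_j^p$, then invoke Lemma \ref{finite L1 sum Hausdorff estimate} using $\sum_j\|f_j^p\|_{L^1}=\sum_j\|f_j\|_{L^p}^p<\infty$, and finally let $t\downarrow m-p$. Your added care in extracting a subsequence with threshold $\delta/2$ from the $\limsup\ge\delta$ condition is a harmless explicit step that the paper leaves implicit.
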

\begin{proof}
    First, apply Lemma \ref{applied reverse hoelder ineq} to the sequence $f_{j}$ for any $t>m-p$. Then apply Lemma \ref{finite L1 sum Hausdorff estimate} to see that
    \begin{equation}
        \mathrm{dim}_{H_{g_{0}}}\left(B(\delta)\right)\le t.
    \end{equation}
    Letting $t$ tend towards $m-p$ gives the result.
\end{proof}

\section{proofs of the main results}
In this section we give the proofs of the main theorems of this paper stated in the introduction.

\begin{thm}
Let $M$ be a connected, closed, and oriented manifold, $g_0$ a smooth Riemannian manifold and $g_j$ a sequence of continuous Riemannian manifolds. If
\begin{align}
    g_j(v,v) \ge \left(1-\frac{1}{j} \right) g_0(v,v), \quad \forall p\in M, v \in T_pM,
\end{align}
and
\begin{align}
    \|g_j-g_0\|_{L^p_{g_0}(M)}\rightarrow 0,\quad p\ge\frac{1}{2},
\end{align}
then there exists a subsequence $g_k$ and there exists $U \subset M$ measurable so that $\vol_{g_0}(U)=\vol_{g_0}(M)$  and $\mathrm{dim}_{H_{g_0}}(M \setminus U)\le m-2p$ and
\begin{align}
    d_k(p,q) \rightarrow d_0(p,q), \quad \forall p,q \in U.
\end{align}
Furthermore, for every $\varepsilon > 0$ we can find a measurable set $U_{\varepsilon}\subset M$ so that $\vol_{g_0}(M \setminus U_{\varepsilon})< \varepsilon$ and 
\begin{align}
    d_k(p,q) \rightarrow d_0(p,q),
\end{align}
uniformly $\forall p,q \in U_{\varepsilon}$.
\end{thm}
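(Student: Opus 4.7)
The plan is to convert the hypotheses into the setting of Lemma \ref{lem: distance potential estimate with error}, then feed the resulting potential estimates into Proposition \ref{Lp Hausdorff dimension control} to locate the exceptional set where pointwise convergence can fail. First I would set $h_j := g_j/(1-1/j)$, so that $h_j \geq g_0$ as bilinear forms, $d_{g_j} = \sqrt{1-1/j}\, d_{h_j}$, and a short computation using $h_j - g_0 = (g_j-g_0)/(1-1/j) + (1/j)g_0/(1-1/j)$ shows $\|h_j - g_0\|_{L^p_{g_0}(M)} \to 0$. After passing to a subsequence (still denoted $h_k$) I may assume the stronger condition $\sum_k \|h_k - g_0\|_{L^p_{g_0}(M)}^p < \infty$.

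Set $f_k := |h_k - g_0|_{g_0}^{1/2}$, so that $\|f_k\|_{L^{2p}_{g_0}(M)}^{2p} = \|h_k - g_0\|_{L^p_{g_0}(M)}^p$ is summable; the assumption $p \geq 1/2$ gives $2p \geq 1$, which is the exponent hypothesis needed for Proposition \ref{Lp Hausdorff dimension control}. Applied with exponent $2p$, that proposition yields for each $n \in \N$ that the set
\[
B_n := \{x \in M : \limsup_{k\to\infty} V_{g_0}f_k(x) \geq 1/n\}
\]
has $g_0$-Hausdorff dimension at most $m - 2p$. Let $B := \bigcup_{n} B_n$; since it is a countable union of sets of dimension $\leq m - 2p < m$, the set $B$ has Hausdorff dimension $\leq m - 2p$ and $\vol_{g_0}(B) = 0$. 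Set $U := M \setminus B$, so $V_{g_0} f_k(x) \to 0$ for every $x \in U$.

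For $x,y \in U$ and any $\varepsilon > 0$, Lemma \ref{lem: distance potential estimate with error} applied to $h_k$ (valid since $h_k \ge g_0$) gives
\[
d_{h_k}(x,y) \leq d_{g_0}(x,y) + \varepsilon + C(\varepsilon)\bigl(V_{g_0}f_k(x) + V_{g_0}f_k(y)\bigr),
\]
so $\limsup_k d_{h_k}(x,y) \leq d_{g_0}(x,y) + \varepsilon$. Since the left side is independent of $\varepsilon$, I take $\varepsilon \to 0$ and combine with the elementary lower bound $d_{g_k}(x,y) \geq \sqrt{1-1/k}\,d_{g_0}(x,y)$ and the relation $d_{g_k} = \sqrt{1-1/k}\, d_{h_k} \leq d_{h_k}$ to conclude $d_{g_k}(x,y) \to d_{g_0}(x,y)$ for all $x,y \in U$.

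For the uniform statement, since $V_{g_0} f_k \to 0$ pointwise almost everywhere on the finite-measure space $(M, g_0)$, Egorov's theorem supplies for each $\varepsilon > 0$ a measurable $U_\varepsilon \subset M$ with $\vol_{g_0}(M \setminus U_\varepsilon) < \varepsilon$ on which $V_{g_0} f_k \to 0$ uniformly; the displayed estimate then controls $d_{h_k}(p,q) - d_{g_0}(p,q)$ uniformly in $(p,q) \in U_\varepsilon \times U_\varepsilon$, and the lower bound passes to uniform convergence for free since $d_{g_0}$ is bounded and $\sqrt{1-1/k} \to 1$. The main technical point is the scaling choice $f_k = |h_k - g_0|^{1/2}$, which forces Proposition \ref{Lp Hausdorff dimension control} to be applied with exponent $2p$ rather than $p$; this is precisely what converts an $L^p$ bound on the metric into a codimension $2p$ singular set, in line with the Sobolev heuristic running through the paper.
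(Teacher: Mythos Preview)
Your proof is correct and follows essentially the same route as the paper: pass to a summable subsequence, set $f_k = |h_k-g_0|_{g_0}^{1/2}$, feed this into Proposition~\ref{Lp Hausdorff dimension control} at exponent $2p$ to locate the exceptional set, and finish with Lemma~\ref{lem: distance potential estimate with error} and Egorov. Two points where you are in fact more careful than the paper: (i) your rescaling $h_j = g_j/(1-1/j)$ makes the hypothesis $h\ge g_0$ of Lemma~\ref{lem: distance potential estimate with error} literally true, whereas the paper applies that lemma directly to $g_k$; and (ii) you apply Egorov to the scalar functions $V_{g_0}f_k$ on $M$ and then invoke the potential estimate, which immediately yields a \emph{product} set $U_\varepsilon\times U_\varepsilon$ on which convergence is uniform---the paper's one-line appeal to Egorov on $d_k$ would naively produce a subset of $M\times M$ rather than of $M$.
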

\begin{proof}
    Since $\|g_{j}-g_{0}\|_{L^{p}_{g_{0}}(M)}\rightarrow 0$, we can find a sub-sequence $g_k$ so that 
    \begin{equation}
        \sum^{\infty}_{j=1}\|g_{k}-g_{0}\|^p_{L^{p}_{g_{0}}(M)}<\infty.
    \end{equation}
    Now, let $f_{k}=\sqrt{|g_{k}-g_{0}|_{g_{0}}}$. Thus the sequence $f_{k}$ satisfies the hypothesis of Proposition \ref{Lp Hausdorff dimension control}. Therefore, for any $\delta>0$, we have that $\mathrm{dim}_{H_{g_0}}B(\delta)\le m-2p$.
    
    Let $D=\bigcup^{\infty}_{n=1} B\left(\frac{1}{n}\right)$. Since the Hausdorff dimension of each $B(\frac{1}{n})$ is bounded above by $m-2p$, it follows that the Hausdorff dimension of $D$ is less than $m-2p$. Let $x$ and $y$ be in $U:=M\setminus D$. Then, by the definition of $D$, we see that for $x,y$ we have
    \begin{equation}
        \lim_{k\rightarrow\infty}\int_M\frac{f_{k}(z)}{d_{g_0}(x,z)^{m-1}}+\frac{f_{k}(z)}{d_{g_0}(y,z)^{m-1}}dV_{g_0}(z)=0.
    \end{equation}
    From Lemma \ref{lem: distance potential estimate with error}, we have for any $\vare>0$ that
    \begin{equation}
        d_{g_{k}}(x,y)-d_{g_{0}}(x,y)-\vare\le C(\vare)\left(\int_M\frac{f_{k}(z)}{d_{g_0}(x,z)^{m-1}}+\frac{f_{k}(z)}{d_{g_0}(y,z)^{m-1}}dV_{g_0}(z)\right).
    \end{equation}
    Since $x$ and $y$ are in $U=M\setminus D$, it follows that the right hand side of the above tends towards zero as $k\rightarrow\infty$. Thus, for each $x$ and $y$ in $U$, and each $\vare>0$, we have
    \begin{equation}
        \limsup_{k\rightarrow\infty}d_{g_{k}}(x,y)\le d_{g_{0}}(x,y)+\vare.
    \end{equation}
    Finally, we let $\vare$ tend to zero. Thus, for $x$ and $y$ in $U$, we have
    \begin{equation}
        \limsup_{k\rightarrow\infty}d_{g_{k}}(x,y)\le d_{g_0}(x,y).
    \end{equation}
    However, the distance below estimate for each $g_k$ shows that we in fact have $\displaystyle\lim_{k \rightarrow \infty} d_{g_k}(x,y)=d_{g_0}(x,y)$.
    Then by Egeroff's theorem, for any $\varepsilon>0$ we can find a set $U_{\varepsilon}$ so that $ U_{\varepsilon} \subset M$, $\vol(M \setminus U_{\varepsilon})<\varepsilon$ and $d_k$ converges uniformly on $U_{\varepsilon}$, as desired.
\end{proof}

We now give a proof for Theorem \ref{TorusThm} using results from the first named author in \cite{Allen-Tori}. Let us recall the statement of Theorem \ref{TorusThm}:
\begin{*thm}
Consider a sequence of Riemannian $n-$manifolds $M_j=(\mathbb{T}^n,g_j)$, $n \ge 3$ satisfying 
 \begin{align}
R_{g_j} \ge -\frac{1}{j}, \,\,\,  \,\,\, \diam(M_j) \le D_0, \,\,\,  \,\,\, \vol(M_j) \le V_0.
\end{align}
Let $g_0$ be a flat torus where $\mathbb{T}^m_0=(\mathbb{T}^m,g_0)$ and assume that $M_j$ is conformal to $\tilde{M}_{0,j}=(\mathbb{T}^m,\tilde{g}_{0,j})$, a metric with constant zero or negative scalar curvature and unit volume, i.e.  $g_j = e^{2f_j} g_{0,j}$. Furthermore, assume that
\begin{align}
\tilde{g}_{0,j} \rightarrow g_0 \text{ in } C^1,
\end{align}
and
\begin{align}
\int_{\mathbb{T}^m} e^{-2f_j} d V_{\tilde{g}_{0,j}} \le C,
\end{align} 
then for any $\varepsilon >0$ there exists a subsequence and a set $U_{\varepsilon} \subset \Tor^m$ so that  $\vol(U_{\varepsilon})=\vol(\Tor^m)-\varepsilon$ and
\begin{align}
  d_k(q_1,q_2) \rightarrow d_{\infty}(q_1,q_2),
\end{align}
uniformly on $U_{\varepsilon}$ where $d_{\infty}$ is the distance function for $\bar{\mathbb{T}}_0^m = (\Tor^m,\bar{g}_0=c_{\infty}^2g_0)$, $c_{\infty}^2=\displaystyle\lim_{k \rightarrow \infty}(\overline{e^{-f_k}})^{-2}= \lim_{k \rightarrow \infty} \left( \dashint_{\mathbb{T}^m} e^{-f_k}dV_{\tilde{g}_{0,j}} \right)^{-2}$.
\end{*thm}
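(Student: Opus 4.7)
The plan is to reduce Theorem \ref{TorusThm} to an application of Theorem \ref{MainTheorem} with background metric $\bar{g}_0 = c_\infty^2 g_0$ and sequence $g_j = e^{2f_j}\tilde{g}_{0,j}$. Two hypotheses must be verified: the $L^p$ convergence $\|g_j - \bar{g}_0\|_{L^p_{\bar{g}_0}}\to 0$ for $1/2\le p\le m/2$, and the pointwise lower bound $g_j \ge (1-1/j)\bar{g}_0$. Both will be extracted from the conformal scalar-curvature analysis of \cite{Allen-Tori}, applied to the substitution $u_j := e^{(m-2)f_j/2}$, which satisfies the prescribed scalar curvature equation
\begin{equation}
-\tfrac{4(m-1)}{m-2}\Delta_{\tilde{g}_{0,j}} u_j + R_{\tilde{g}_{0,j}} u_j = R_{g_j}\,u_j^{(m+2)/(m-2)}.
\end{equation}

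For the $L^p$ convergence, I would combine the hypotheses $R_{g_j}\ge -1/j$, $R_{\tilde{g}_{0,j}}\le 0$, the volume and diameter bounds, the non-collapsing bound \eqref{LowerVolume}, and the $C^1$ convergence $\tilde{g}_{0,j}\to g_0$ with the a-priori estimates of \cite{Allen-Tori}, yielding uniform $W^{1,2}$ control on $u_j$ and, after passing to a subsequence, $L^q$ convergence of $e^{-f_j}$ to $1/c_\infty$ (consistent with the characterization $c_\infty^{-1}=\lim \overline{e^{-f_j}}$) and of $e^{2f_j}$ to $c_\infty^2$. Using that $\tilde{g}_{0,j}$ is $C^1$-close to $g_0$, this promotes to $\|g_j - \bar{g}_0\|_{L^p_{\bar{g}_0}}\to 0$ for $1/2 \le p\le m/2$. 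For the pointwise lower bound, I would combine $\tilde{g}_{0,j}\ge (1-\delta_j)g_0$ (a consequence of the $C^1$ convergence) with a Harnack/minimum-principle argument on $u_j$: the non-collapsing hypothesis, the bound $R_{g_j}\ge -1/j$, and the $L^q$ near-constancy of $u_j$ combine to produce a pointwise lower bound $u_j\ge (1-\epsilon_j)c_\infty^{(m-2)/2}$ with $\epsilon_j\to 0$. Passing to a further subsequence so that $(1-\epsilon_j)(1-\delta_j)\ge 1-1/j$ yields the hypothesis of Theorem \ref{MainTheorem}.

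With both hypotheses in place, Theorem \ref{MainTheorem} produces a measurable set $\tilde{U}_\varepsilon \subset \Tor^m$ with $\vol_{\bar{g}_0}(\Tor^m\setminus \tilde{U}_\varepsilon)<\varepsilon$ on which $d_{g_j}\to d_{\bar{g}_0}=d_\infty$ uniformly; adjusting $\varepsilon$ by the factor $c_\infty^m$ converts between $\vol_{\bar{g}_0}$ and $\vol_{g_0}$ and yields the conclusion of Theorem \ref{TorusThm}. The main obstacle is the pointwise lower bound on $u_j$: a direct application of the minimum principle to the conformal Laplacian does not suffice, because the source term $R_{g_j} u_j^{(m+2)/(m-2)}$ may be negative of size $1/j$; one must carefully combine the near-constancy of $u_j$ in $L^q$ with the non-collapsing hypothesis \eqref{LowerVolume} to bootstrap $L^q$ control to pointwise control, as in \cite{Allen-Tori}.
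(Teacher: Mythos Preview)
Your overall strategy matches the paper's exactly: verify the two hypotheses of Theorem \ref{MainTheorem} (the pointwise lower bound $g_j\ge(1-C/j)\bar g_0$ and $L^p$ convergence $\|g_j-\bar g_0\|_{L^p}\to 0$), then invoke Theorem \ref{MainTheorem} and Egoroff. The difference lies in the choice of auxiliary function for the PDE analysis. You work with the Yamabe substitution $u_j=e^{(m-2)f_j/2}$ and propose a Harnack/minimum-principle argument on $u_j$; the paper instead works with $e^{-f_j}$ and $e^{-2f_j}$. Rewriting the scalar-curvature lower bound in those variables yields the single inequality
\[
-(n-1)\Delta_{\tilde g_{0,j}}e^{-2f_j}+(n-1)(n+2)\bigl|\nabla_{\tilde g_{0,j}}e^{-f_j}\bigr|^2\le\tfrac{1}{j},
\]
which does double duty: integrating it gives $\int|\nabla e^{-f_j}|^2\le C/j$, hence (via Poincar\'e) $e^{-f_j}\to c_\infty^{-1}$ in $L^2$; and dropping the gradient term gives $-\Delta_{g_0}e^{-2f_j}\le C/j$, so $e^{-2f_j}$ is nearly subharmonic and the mean-value inequality (Lemma 4.1 of \cite{Allen-Tori}) produces a pointwise \emph{upper} bound on $e^{-2f_j}$, i.e.\ the desired pointwise \emph{lower} bound on $e^{2f_j}$. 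This sidesteps precisely the obstacle you identify in your last paragraph: no bootstrap from $L^q$ near-constancy to pointwise control is needed, because the subharmonic structure does the work.

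By contrast, the Yamabe equation for $u_j$ has a source term $R_{g_j}u_j^{(m+2)/(m-2)}$ with the wrong sign for a direct minimum principle, and there is no obvious way to make $u_j$ (or a power of it) subharmonic. Your proposal is not wrong, but the paper's choice of variable is what makes the argument clean; if you had to carry out your version, the lower-bound step would likely end up reproducing the $e^{-2f_j}$ computation anyway. A secondary point: your claim of $L^q$ convergence of $e^{2f_j}$ to $c_\infty^2$ requires care, since $e^{-f_j}\to c_\infty^{-1}$ in $L^2$ does not by itself control $e^{2f_j}$. The paper (implicitly) obtains $L^{1/2}$ convergence of $e^{2f_j}$ by combining the $L^2$ convergence of $e^{-f_j}$ with the volume bound $\int e^{mf_j}\le V_0$ (which controls $\int e^{2f_j}$ via H\"older); this is enough to feed into Theorem \ref{MainTheorem} with $p=\tfrac12$.
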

\begin{proof}[Proof of Theorem \ref{TorusThm}]
By rewriting the scalar curvature of $M_j$ in terms of $M_{0,j}$, one can obtain the following PDE inequality:
\begin{align}\label{Main PDE Inequality}
-(n-1)\Delta^{g_{0,j}} e^{-2f_j} + (n-1)(n+2) |\nabla^{g_{0,j}} e^{-f_j}|^2  \le \frac{1}{j}.
\end{align}
In the proof of Theorem 1.4 of \cite{Allen-Tori} it is shown how to use the $C^1$ convergence of $g_{0,j} \rightarrow g_0$ to rewrite this PDE inequality in terms of $g_0$, and after integrating one obtains the estimate
\begin{align}\label{SobolevBound}
\int_{\mathbb{T}^m} |\nabla^{g_0} e^{-f_j}|^2  dV_{g_0} \le \frac{C}{j}.
\end{align}
Hence by the Poincare inequality we find
\begin{align}\label{Poincare Consequence}
C_P \int_{\Tor^n}|e^{-f_j} - \overline{e^{-f_j}}|^2dV_{g_{0}} \le \int_{\Tor^n} |\nabla^{g_{0}} e^{-f_j}|^2  dV_{g_{0}} &\le \frac{C}{j},
\end{align}
which implies $L^2$ convergence of $e^{-f_j}$ to its average as well as pointwise a.e. convergence on a subsequence of $e^{-f_k}$ to its average,
\begin{align}
\overline{e^{-f_k}} = \dashint_{\Tor^n}e^{-f_j}dV_{g_{0,k}},
\end{align}
which is non-zero and well defined by Lemma 4.2 of \cite{Allen-Tori} and where we have taken a further subsequence if necessary so that $\displaystyle \lim_{k \rightarrow \infty} \overline{e^{-f_k}}$ is well defined. This also implies convergence in $L^2$ norm for any measurable set $U \subset \mathbb{T}^n$
\begin{align}\label{ImportantConvergenceEq}
\lim_{k\rightarrow \infty}\int_{U} e^{-2f_k} dV_{g_{0,k}}=\lim_{k\rightarrow \infty} \left(\overline{e^{-f_k}}\right)^2\vol_{g_{0,k}}(U).
\end{align} 
In fact, by the reverse triangle inequality for norms, if we have two functions $h,k:\mathbb{T}^n\rightarrow \R$ we find
\begin{align}
\left|\|h\|_{L^2_g(U)}-\|k\|_{L^2_g(U)}\right| \le \|h-k\|_{L^2_g(U)} \le \|h-k\|_{L^2_g(\mathbb{T}^n)},
\end{align}
for any Riemannian metric $g$ and hence \eqref{ImportantConvergenceEq} is uniform in $U$ and in particular uniform in $B_{M_{0,j}}(x,r)$ for all $x \in \mathbb{T}^n$, $0 < r \le R$.
In the proof of Theorem 1.4 of \cite{Allen-Tori} it is then shown how to use the $C^1$ convergence of $g_{0,j}\rightarrow g_0$ to rewrite \eqref{Main PDE Inequality} to obtain the PDE inequality
\begin{align}\label{Essential PDE Rewrite 2}
- \Delta^{g_0} e^{-2f_j}  &\le \frac{C}{j}.
\end{align}
Then, as in Lemma 4.1 of \cite{Allen-Tori}, we can subtract an auxiliary function from $e^{-2f_j}$ to find a subharmonic function to apply  the mean value inequality to estimate the conformal factor from below by 
\begin{align}
\lp\limsup_{j \rightarrow \infty}\max_{x \in \mathbb{T}^n}\dashint_{B_{M_{0,j}}(x,r)} e^{-2f_j} dV_{g_{0,j}}\rp^{-1} \le \liminf_{j \rightarrow \infty} \min_{\mathbb{T}^n} e^{2f_j}, \quad r \le R,
\end{align} 
which we have just argued is well defined and converging to a constant. Hence by combining \eqref{ImportantConvergenceEq} with the obtained lower bound on the sequence of conformal factors we are able to find
\begin{align}
    c_{\infty}^2\left( 1 - \frac{C}{j} \right) g_0(v,v)&= \left( 1 - \frac{C}{j} \right) \bar{g}_0(v,v)
    \\&\le g_k(v,v), \quad \forall p \in M, v \in T_pM.
\end{align}
Hence by Theorem \ref{MainTheorem} we find pointwise convergence of $d_{g_k}$ to $d_{\infty}$ away from a set $S$ so that $\vol(S)=0$ and for any $\varepsilon>0$ we can find a set $S_{\varepsilon}$ so that $S \subset S_{\varepsilon}$, $\vol(S_{\varepsilon})<\varepsilon$ and $d_k$ converges uniformly on $U_{\varepsilon}=\Tor^m \setminus S_{\varepsilon}$, as desired.
\end{proof}

\bibliographystyle{alpha}
\bibliography{bibliography}
\end{document}